\theoremstyle{plain}
\newtheorem{thm}{Theorem}[section]
\newtheorem{cor}{Corollary}[section]
\newtheorem{mr}{Monotonicity rule}[section]
\theoremstyle{remark}
\newtheorem{rem}{Remark}[section]
\theoremstyle{plain}
\newtheorem{lem}{Lemma}[section]
\theoremstyle{definition}
\numberwithin{equation}{section}
\begin{document}

\title[]{Monotonicity rules for the ratio of two function series and two integral transforms}
\author{Zhong-Xuan Mao, Jing-Feng Tian*}

\address{Zhong-Xuan Mao \\
Hebei Key Laboratory of Physics and Energy Technology\\
Department of Mathematics and Physics\\
North China Electric Power University \\
Yonghua Street 619, 071003, Baoding, P. R. China}
\email{maozhongxuan\symbol{64}ncepu.edu.cn}

\address{Jing-Feng Tian\\
Hebei Key Laboratory of Physics and Energy Technology\\
Department of Mathematics and Physics\\
North China Electric Power University\\
Yonghua Street 619, 071003, Baoding, P. R. China}
\email{tianjf\symbol{64}ncepu.edu.cn}

\begin{abstract}
In this paper, we investigate the monotonicity of the functions $t \mapsto \frac{\sum_{k=0}^\infty a_k w_k(t)}{\sum_{k=0}^\infty b_k w_k(t)}$ and $x \mapsto \frac{\int_\alpha^\beta f(t) w(t,x) \textrm{d} t}{\int_\alpha^\beta g(t) w(t,x) \textrm{d} t}$, focusing on case where the monotonicity of $a_k/b_k$ and $f(t)/g(t)$ change once.
The results presented also provide insights into the monotonicity of the ratios of two power series, two $\mathcal{Z}$-transforms, two discrete Laplace transforms, two discrete Mellin transforms, two Laplace transforms, and two Mellin transforms. Finally, we employ these monotonicity rules to present several applications in the realm of special functions and stochastic orders.
\end{abstract}

\footnotetext{\textit{2020 Mathematics Subject Classification}. Primary 26A48; 44A05; Secondary 44A10; 60E15.}
\keywords{Monotonicity; Monotonicity rules; Integral transform; Laplace transform; Series; Mittag-Leffler function; Stochastic orders}

\thanks{*Corresponding author: Jing-Feng Tian, e-mail addresses,
tianjf@ncepu.edu.cn}
\maketitle

\section{Introduction}
Let
\begin{equation} \label{S}
\mathcal{S}\{a_k\}(t) = \sum_{k=0}^\infty a_k w_k(t)
\end{equation}
be a function series and
\begin{equation} \label{T}
\mathcal{T}\{f(t)\}(x)=\int_\alpha^\beta f(t) w(t,x) \textrm{d} t
\end{equation}
be an integral transform that maps a function from its original function space to another function space through integration, where $w_k(t)$ and $w(t,x)$ are called the kernel functions.

Many extremely important and widely used series and transforms are special cases of (\ref{S}) and (\ref{T}). For example, important power series, Taylor series, $\mathcal{Z}$-transforms, discrete Laplace transforms
$\sum_{k=0}^\infty a_k e^{-kt}$, and discrete Mellin transforms $\sum_{k=0}^\infty a_k (k+1)^{t-1}$ all share the form of (\ref{S}); Fourier transforms, Fourier sine transforms, Fourier cosine transforms, Laplace transforms, Mellin transforms, Wavelet transforms, Hilbert transform, Laguerre transforms, and Laguerre transforms all exemplify particular instances of (\ref{T}).

The aforementioned series and transforms not only have a significant role in solving differential equations \cite{Nikiforov-1988,Bohner-PAMS-2017}, studying special functions \cite{Ismail-CJM-1977,Ismail-AP-1977,Ismail-SIAM-JMA-1979,Ismail-PAMS-1982}, and exploring number theory and complex analysis in the realm of mathematics, but they are also widely used in diverse fields such as engineering, physics, signal processing, image processing, computer vision, telecommunications, geophysics, finance, and control theory.

The primary focus of this article is to establish the monotonicity rules for the ratios of
two function series
\begin{equation} \label{AB}
t \mapsto \frac{\mathcal{A}(t):=\sum_{k=0}^\infty a_k w_k(t)}{\mathcal{B}(t):=\sum_{k=0}^\infty b_k w_k(t)},
\end{equation}
and two integral transforms
\begin{equation} \label{FG}
x \mapsto \frac{\mathcal{F}(x):=\int_\alpha^\beta f(t) w(t,x) \textrm{d} t}{\mathcal{G}(x):=\int_\alpha^\beta g(t) w(t,x) \textrm{d} t}.
\end{equation}
We consider the case that the monotonicity of $\{a_k/b_k\}$ and $t\mapsto f(t)/g(t)$ change once.

This paper will also present some applications of these monotonicity rules. First, we provide some applications in the field of special functions: we establish the monotonicity of the functions
\begin{equation} \label{R1}
t \mapsto \frac{\mathcal{DE}_{a,b}(t) := \sum_{k=0}^\infty \frac{w_k(t)}{\Gamma(a k+b)}  }{\mathcal{DE}_{c,d}(t):=\sum_{k=0}^\infty \frac{ w_k(t)}{\Gamma(c k+d)}},
\end{equation}
and
\begin{equation} \label{R2}
x \mapsto \frac{\mathcal{CE}_{a,b}(t) := \int_0^\infty \frac{w(t,x)}{\Gamma(a t+b)}  \textrm{d} t }{\mathcal{CE}_{c,d}(t) := \int_0^\infty \frac{w(t,x)}{\Gamma(a t+b)} \textrm{d} t}.
\end{equation}
Subsequently, we introduce applications within the realm of stochastic orders, specifically, we offer comparisons between two discrete and continuous random variables by utilizing the Laplace transform ratio order.

\section{Main results}

In this section, we study the monotonicity of (\ref{AB}) and (\ref{FG}). Firstly, we introduce several sets of sequence of functions $\{w_k(t)\}_{k\geq0}$ and binary functions $w(t,x)$.

Let $\mathcal{DW}_{11}(r)(\mathcal{DW}_{12}(r))$ be the set of all sequences of functions $\{w_k(t)\}_{k\geq0}$ that satisfy the following conditions:
\begin{itemize}
\item[(i)] $w_k(t)$ is positive and $w_k^\prime(t)$ is continuous on $(0,r)$ for all $k\geq0$,
\item[(ii)] the sequence $k\mapsto w_k^\prime(t)/ w_k(t)$ is increasing (decreasing) for all fixed $t\in(0,r)$.
\end{itemize}
Let $\mathcal{DW}_{2}(r)$ be the set of all sequences of functions $\{w_k(t)\}_{k\geq0}$ that satisfy the following conditions:
\begin{itemize}
\item[(i)] $w_k(t)$ is positive as well as $w_k^\prime(t)$ and $w_k^{\prime\prime}(t)$ are continuous on $(0,r)$ for all $k\geq0$,
\item[(ii)] $\frac{w_k(0^+)}{w_0(0^+)} = 0$ for all $k\geq 1$ with $w_0(0^+) = c \in (0,\infty)$, and $\frac{w_k^\prime(0^+)}{w_1^\prime(0^+)} = 0$ for all $k\geq 2$,
\item[(iii)] $w_0^\prime(t)=0$  and $w_k^\prime(t)\geq0 (k=1,2,\cdots)$ for all $t\in (0,r)$,
\item[(iv)] the sequence $k \mapsto w_k^{\prime\prime}(t)/ w_k^{\prime}(t)$ is increasing for all fixed $t\in(0,r)$.
\end{itemize}
Let $\mathcal{DW}_{3}(r)$ be the set of all sequences of functions $\{w_k(t)\}_{k\geq0}$ that satisfy the following conditions:
\begin{itemize}
\item[(i)] $w_k(t)$ is positive as well as $w_k^\prime(t)$ and $w_k^{\prime\prime}(t)$ are continuous on $(0,r)$ for all $k\geq0$,
\item[(ii)] $\frac{w_k(r^-)}{w_0(r^-)} = 0$ for all $k\geq 1$ with $w_0(r^-) = c \in (0,\infty)$, and $\frac{w_k^\prime(r^-)}{w_1^\prime(r^-)} = 0$ for all $k\geq 2$,
\item[(iii)] $w_0^\prime(t)=0$ and $w_k^\prime(t)\leq0 (k=1,2,\cdots)$ for all $t\in (0,r)$,
\item[(iv)] the sequence $k \mapsto w_k^{\prime\prime}(t)/ w_k^{\prime}(t)$ is decreasing for all fixed $t\in(0,r)$.
\end{itemize}

\begin{rem}
Clearly, $\{t^k\}_{k\geq0} \in \mathcal{DW}_{11}(r) \cap \mathcal{DW}_{2}(r)$, $\{t^{-k}\}_{k\geq0} \in \mathcal{DW}_{12}(r) \cap \mathcal{DW}_{3}(\infty)$, $\{e^{-kt}\}_{k\geq0} \in \mathcal{DW}_{12}(r) \cap \mathcal{DW}_{3}(\infty)$, $\{(k+1)^{-t}\}_{k\geq0} \in \mathcal{DW}_{12}(r) \cap \mathcal{DW}_{3}(\infty)$.
\end{rem}

Likewise, let $\mathcal{CW}_{11}(\alpha,\beta)(\mathcal{CW}_{12}(\alpha,\beta))$ be the set of all binary functions $w(t,x)$ that satisfy the following conditions:
\begin{itemize}
\item[(i)] $w(t,x)$ is positive as well as $w(t,x)$ and $w_x(t,x)$ are continuous on $[\alpha,\beta] \times (0,\infty)$,
\item[(ii)] the function $t\mapsto w_x(t,x)/w(t,x)$ is increasing (decreasing) on $[\alpha,\beta]$ for all fixed $x\in(0,\infty)$.
\end{itemize}
Let $\mathcal{CW}_{2}(\alpha,\beta)$ be the set of all binary functions $w(t,x)$ that satisfy the following conditions:
\begin{itemize}
\item[(i)] $w(t,x)$ is positive as well as $w(t,x)$, $w_x(t,x)$, and $w_{xx}(t,x)$ are continuous on $[\alpha,\beta] \times (0,\infty)$,
\item[(ii)] $\frac{w(t,0^+)}{w(\alpha,0^+)} = 0$ for all $t\in(\alpha,\beta]$ with $w(\alpha,0^+) = c \in (0,\infty)$, and $\frac{w_x(t,0^+)}{w_x(\alpha,0^+)} = 0$ for all $t\in(\alpha,\beta]$,
\item[(iii)] $w_x(\alpha,x)=0$ for all $x\in (0,\infty)$, and $w_x(t,x)\geq 0$ for all $(t,x)\in (\alpha,\beta] \times (0,\infty)$,
\item[(iv)] the function $t\mapsto w_{xx}(t,x)/ w_x(t,x)$ is increasing on $[\alpha,\beta]$ for all fixed $x\in(0,\infty)$.
\end{itemize}
Let $\mathcal{CW}_{3}(\alpha,\beta)$ be the set of all binary functions $w(t,x)$ that satisfy the following conditions:
\begin{itemize}
\item[(i)] $w(t,x)$ is positive as well as $w(t,x)$, $w_x(t,x)$, and $w_{xx}(t,x)$ are continuous on $[\alpha,\beta] \times (0,\infty)$,
\item[(ii)] $\frac{w(t,\infty)}{w(\alpha,\infty)} = 0$ for all $t\in(\alpha,\beta]$ with $w(\alpha,\infty) = c \in (0,\infty)$, and $\frac{w_x(t,\infty)}{w_x(\alpha,\infty)} = 0$ for all $t\in(\alpha,\beta]$,
\item[(iii)] $w_x(\alpha,x)=0$ for all $x\in (0,\infty)$, and $w_x(t,x)\leq 0$ for all $(t,x)\in (\alpha,\beta] \times (0,\infty)$,
\item[(iv)] the function $t\mapsto w_{xx}(t,x)/ w_x(t,x)$ is decreasing on $[\alpha,\beta]$ for all fixed $x\in(0,\infty)$.
\end{itemize}

\begin{rem}
Clearly, $x^t \in \mathcal{CW}_{11}(\alpha,\beta) \cap \mathcal{CW}_{2}(\alpha,\beta)$,
$x^{-t} \in \mathcal{CW}_{12}(\alpha,\beta) \cap \mathcal{CW}_{3}(\alpha,\beta)$,
$e^{-tx} \in \mathcal{CW}_{12}(\alpha,\beta) \cap \mathcal{CW}_{3}(\alpha,\beta)$,
$(t+1)^{-x} \in \mathcal{CW}_{12}(\alpha,\beta) \cap \mathcal{CW}_{3}(\alpha,\beta)$.
\end{rem}

\begin{rem}
For convenience, we denote that $\mathcal{DW}_{1}(r):=\mathcal{DW}_{11}(r) \cup \mathcal{DW}_{11}(r)$ and $\mathcal{CW}_{1}(\alpha,\beta):=\mathcal{CW}_{11}(\alpha,\beta) \cup \mathcal{CW}_{12}(\alpha,\beta)$.
\end{rem}

Let $-\infty \leq a < b\leq \infty$, functions $F$ and $G$ be differentiable on $(a,b)$, and $G\neq0$ on $(a,b)$. Then the Yang's $H$ function \cite{Yang-JMAA-2015} is defined by
\begin{equation*}
H_{F,G} =\frac{F^\prime}{G^\prime} G -F.
\end{equation*}
Clearly
\begin{equation*}
\Big( \frac{F}{G} \Big)^\prime = \frac{G^\prime}{G^2} H_{F,G},
\end{equation*}
and if $F$ and $G$ are twice differentiable, then
\begin{equation*}
H_{F,G}^\prime= \Big( \frac{F^\prime}{G^\prime} \Big)^\prime G.
\end{equation*}

\subsection{Monotonicity rules for the ratio of two function series}

In this section, we consider the monotonicity of function  (\ref{AB}). Initially, the case that the sequence $\{a_k/b_k\}$ is either increasing or decreasing for all $k\geq 0$ had been given in \cite[Lemma 2.2]{Koumandos-MS-2009}.

\begin{mr} \label{mr1-0}
(\cite[Lemma 2.2]{Koumandos-MS-2009}) Let the sequence of functions $\{w_k(t)\}_{k\geq0} \in \mathcal{DW}_{1}(r)$, both $\sum_{k=0}^\infty a_k w_k^{(l)}(t)$ and $\sum_{k=0}^\infty b_k w_k^{(l)}(t)$ converge uniformly on $(0,r)$ for $l=0,1$ and $a_k,b_k\geq 0$ for all $k$. If $\{a_k/b_k\}$ is increasing (decreasing) for all $k \geq 0$ and $\{w_k(t)\}_{k\geq0} \in \mathcal{DW}_{11}(r)$, then the function $t\mapsto \mathcal{A}(t)/\mathcal{B}(t)$ is increasing (decreasing) on $(0,r)$; if $\{a_k/b_k\}$ is increasing (decreasing) for all $k \geq 0$ and $\{w_k(t)\}_{k\geq0} \in \mathcal{DW}_{12}(r)$, then the function $t\mapsto \mathcal{A}(t)/\mathcal{B}(t)$ is decreasing (increasing) on $(0,r)$.
\end{mr}

\begin{rem}
In Monotonicity rule \ref{mr1-0}, $\sum_{k=0}^\infty a_k w_k^{(l)}(t)$ is regarded as
\begin{equation*}
\sum_{k=0}^\infty a_k w_k^{(l)}(t) =
\begin{cases}
\sum_{k=0}^\infty a_k w_k(t), & l=0, \\
\sum_{k=0}^\infty a_k w_k^\prime(t), & l=1.
\end{cases}
\end{equation*}
\end{rem}


Now we consider the case that there exists an integer $m\geq 1$ such that the sequence $\{a_k/b_k\}$ is increasing (decreasing) for all $0 \leq k \leq m-1$ and decreasing (increasing) for all $k \geq m$.

\begin{mr} \label{mr1-1}
Let the sequence of functions $\{w_k(t)\}_{k\geq0} \in \mathcal{DW}_{2}(r)$, both $\sum_{k=0}^\infty a_k w_k^{(l)}(t)$ and $\sum_{k=0}^\infty b_k w_k^{(l)}(t)$ converge uniformly on $(0,r)$ for $l=0,1,2$ and $a_k,b_k\geq 0$ for all $k$. Suppose that the sequence $\{a_k/b_k\}$ is increasing (decreasing) for all $0\leq k\leq m$ and decreasing (increasing) for all $k\geq m$. Then
\begin{itemize}
\item[(i)] the function $t\mapsto \mathcal{A}(t)/\mathcal{B}(t)$ is increasing (decreasing) on $(0,r)$ if and only if $H_{\mathcal{A},\mathcal{B}}(r^-)\geq$ ($\leq$) $0$;
\item[(ii)] there exists $t_0\in (0,r)$ such that the function $t\mapsto \mathcal{A}(t)/\mathcal{B}(t)$ is increasing (decreasing) on $(0, t_0)$ and  decreasing (increasing) on $(t_0,r)$ if $H_{\mathcal{A},\mathcal{B}}(r^-)< (>)0$.
\end{itemize}
\end{mr}

\begin{proof}
Without loss of generality, we consider the case that the sequence $\{a_k/b_k\}$ is increasing for $0\leq k \leq m$ and decreasing for $k\geq m$. Clearly, from $H_{\mathcal{A},\mathcal{B}}=(\mathcal{A}/\mathcal{B})^\prime \mathcal{B}^2/\mathcal{B}^\prime \geq 0$, we conclude the necessity in the first statement. Next, we prove the sufficiency by using mathematical induction.

(I) When $m=1$, that is, the sequence $\{a_k/b_k\}$ is increasing for $k=0,1$ and decreasing for all $k\geq 1$. Form the formulas
\begin{equation*}
\mathcal{A}^\prime(t)=\sum_{k=0}^\infty a_k w_k^\prime(t) = \sum_{k=1}^\infty a_k w_k^\prime(t), \quad
\mathcal{B}^\prime(t)=\sum_{k=0}^\infty b_k w_k^\prime(t) = \sum_{k=1}^\infty b_k w_k^\prime(t),
\end{equation*}
and using Monotonicity rule \ref{mr1-0}, we obtain that the function $t\mapsto\mathcal{A}^\prime(t)/\mathcal{B}^\prime(t)$ is decreasing on $(0,r)$.
Since $H_{\mathcal{A},\mathcal{B}}^\prime = (\mathcal{A}^\prime/\mathcal{B}^\prime)^\prime \mathcal{B} \leq 0$, the function $t\mapsto H_{\mathcal{A},\mathcal{B}}(t)$ is decreasing on $(0,r)$.

Noting that $H_{\mathcal{A},\mathcal{B}}(0^+)=b_0 \big( \frac{a_1}{b_1} - \frac{a_0}{b_0} \big) \geq 0$, we divide the proof into two cases.

(i) If $H_{\mathcal{A},\mathcal{B}}(r^-) \geq 0$, then $H_{\mathcal{A},\mathcal{B}}\geq0$ and $(\mathcal{A}/\mathcal{B})^\prime = \mathcal{B}^{-2}\mathcal{B}^\prime H_{\mathcal{A},\mathcal{B}} \geq 0$.

(ii) If $H_{\mathcal{A},\mathcal{B}}(r^-) < 0$, then there exists $t_0 \in (0,r)$ such that $H_{\mathcal{A},\mathcal{B}}(t) \geq 0$ for all $t \in (0, t_0)$ and $H_{\mathcal{A},\mathcal{B}}(t) \leq 0$ for all $t \in (t_0,r)$. Therefore, from $(\mathcal{A}/\mathcal{B})^\prime = \mathcal{B}^{-2}\mathcal{B}^\prime H_{\mathcal{A},\mathcal{B}}$, the function $t\mapsto \mathcal{A}(t)/\mathcal{B}(t)$ is first increasing and then decreasing.

(II) Suppose it is true for $m=n$, that is, the sequence $\{a_k/b_k\}$ is increasing for $0\leq k\leq n$ and decreasing for all $k\geq n$. Now we consider the case $m=n+1$. Since
\begin{equation*}
\frac{\mathcal{A}^\prime(t)}{\mathcal{B}^\prime(t)} = \frac{\sum_{k=1}^\infty a_k w_k^\prime(t)}{\sum_{k=1}^\infty b_k w_k^\prime(t)} = \frac{\sum_{k=0}^\infty a_{k+1} w_{k+1}^\prime(t)}{\sum_{k=0}^\infty b_{k+1} w_{k+1}^\prime(t)}
\end{equation*}
and the sequence $\{a_{k+1}/b_{k+1}\}$ is increasing for all $0\leq k \leq n$ and decreasing for all $k\geq n$. Thus, by using induction hypothesis, the function $t\mapsto\mathcal{A}^\prime(t)/\mathcal{B}^\prime(t)$ is increasing on $(0,r)$ if $H_{\mathcal{A}^\prime,\mathcal{B}^\prime}(r^-) \geq 0$, and there exists $t_1 \in (0,r)$ such that the function $t\mapsto\mathcal{A}^\prime(t)/\mathcal{B}^\prime(t)$ is increasing on $(0,t_1)$ and decreasing on $(t_1,r)$ if $H_{\mathcal{A}^\prime,\mathcal{B}^\prime}(r^-) < 0$.

(i) If $H_{\mathcal{A}^\prime,\mathcal{B}^\prime}(r^-) \geq 0$, then $H^\prime_{\mathcal{A},\mathcal{B}}=(\mathcal{A}^\prime/\mathcal{B}^\prime)^\prime \mathcal{B} \geq 0$, $H_{\mathcal{A},\mathcal{B}} \geq 0$, and $(\mathcal{A}/\mathcal{B})^\prime = \mathcal{B}^{-2} \mathcal{B}^\prime H_{\mathcal{A},\mathcal{B}} \geq 0$.

(ii) If $H_{\mathcal{A},\mathcal{B}}(r^-) \geq0$ and $H_{\mathcal{A}^\prime,\mathcal{B}^\prime}(r^-) <0$, then both $\mathcal{A}^\prime/\mathcal{B}^\prime$ and $H_{\mathcal{A},\mathcal{B}}$ are first increasing and then decreasing. Thus $H_{\mathcal{A},\mathcal{B}}\geq0$ and $(\mathcal{A}/\mathcal{B})^\prime = \mathcal{B}^{-2} \mathcal{B}^\prime H_{\mathcal{A},\mathcal{B}} \geq 0$.

(iii) If $H_{\mathcal{A},\mathcal{B}}(r^-) <0$ and $H_{\mathcal{A}^\prime,\mathcal{B}^\prime}(r^-) <0$, then both $\mathcal{A}^\prime/\mathcal{B}^\prime$ and $H_{\mathcal{A},\mathcal{B}}$ are first increasing and then decreasing. Thus there exists $t_1 \in (0,r)$ such that $H_{\mathcal{A},\mathcal{B}}(t) \geq 0$ for all $t\in(0,t_1)$ and $H_{\mathcal{A},\mathcal{B}}(t) \leq 0$ for all $t\in(t_1,\infty)$.
\end{proof}

Now we consider the case that $\{w_k(t)\}_{k\geq0} \in \mathcal{DW}_{3}(r)$, which leads to the following Monotonicity rule.

\begin{mr} \label{mr1-2}
Let $\{w_k(t)\}_{k\geq0} \in \mathcal{DW}_{3}(r)$, both $\sum_{k=0}^\infty a_k w_k^{(l)}(t)$ and $\sum_{k=0}^\infty a_k,b_k w_k^{(l)}(t)$ converge uniformly on $(0,r)$ for $l=0,1,2$ and $a_k, b_k\geq 0$ for all $k$. Suppose that the sequence $\{a_k/b_k\}$ is increasing (decreasing) for all $0\leq k\leq m$ and decreasing (increasing) for all $k\geq m$. Then
\begin{itemize}
\item[(i)] the function $t\mapsto \mathcal{A}(t)/\mathcal{B}(t)$ is decreasing (increasing) on $(0,r)$ if and only if $H_{\mathcal{A},\mathcal{B}}(0^+)\geq$ ($\leq$) $0$;
\item[(ii)] there exists $t_0\in (0,r)$ such that the function $t\mapsto \mathcal{A}(t)/\mathcal{B}(t)$ is increasing (decreasing) on $(0, t_0)$ and decreasing (increasing) on $(t_0,r)$ if $H_{\mathcal{A},\mathcal{B}}(0^+)< (>)0$.
\end{itemize}
\end{mr}

\begin{proof}
The proof is similar to that of Monotonicity rules \ref{mr1-1}, hence it is omitted here.
\end{proof}

\begin{rem}
Monotonicity rules \ref{mr1-0}, \ref{mr1-1}, and \ref{mr1-2} also hold for the case of $r\to \infty$. Moreover, let $a_k=b_k=0$ for all $k \geq N$, these three rules also investigate the monotonicity of the function
\begin{equation*}
t \mapsto \frac{\sum_{k=0}^N a_k w_k(t) }{\sum_{k=0}^N b_k w_k(t)}.
\end{equation*}
\end{rem}

\subsubsection{The monotonicity rules for the ratio of two power series}

Taking $w_k(t)=t^k$, then the functions $\mathcal{A}$ and $\mathcal{B}$ reduce to the so-called power series.
\begin{equation*}
\mathcal{A}(t)= \sum_{k=0}^\infty a_k t^{k}, \quad \mathcal{B}(t)=\sum_{k=0}^\infty b_k t^{k}.
\end{equation*}
In this case, Monotonicity rule \ref{mr1-0} reduces to \cite[Lemma 1]{Biernacki-AUMCS-1955} (see also \cite[Lemma 2.1]{Ponnusamy-M-1997}).
\begin{cor} \label{cor-pow-1}
(\cite[Lemma 1]{Biernacki-AUMCS-1955}) Let $\mathcal{A}(t)=\sum_{k=0}^\infty a_k t^{k}$ and $\mathcal{B}(t)=\sum_{k=0}^\infty b_k t^{k}$ be two real power series converging on $(-r,r)$ and $b_k\geq 0$ for all $k$. If $\{a_k/b_k\}$ is increasing (decreasing) for all $k \geq 0$, then the function $t\mapsto \mathcal{A}(t)/\mathcal{B}(t)$ is increasing (decreasing) on $(0,r)$.
\end{cor}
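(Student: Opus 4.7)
The plan is to deduce Corollary \ref{cor-pow-1} as a direct specialization of Monotonicity rule \ref{mr1-0} to the kernel $w_k(t) = t^k$. Two hypotheses of the parent rule must be checked, after which the conclusion is automatic.

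First I would verify the membership $\{t^k\}_{k\geq 0} \in \mathcal{DW}_{11}(r)$. Positivity of $w_k(t) = t^k$ on $(0, r)$ and continuity of $w_k'(t) = k t^{k-1}$ are immediate, so condition (i) holds. For condition (ii), the logarithmic derivative $w_k'(t)/w_k(t) = k/t$ is strictly increasing in $k$ for every fixed $t \in (0, r)$. This is already asserted in the remark following the definition of $\mathcal{DW}_{11}(r)$, so no genuinely new work is required.

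Next I would check the uniform-convergence hypothesis. Because $\mathcal{A}$ and $\mathcal{B}$ converge on $(-r, r)$, standard theory of real power series implies that $\sum a_k t^k$, $\sum b_k t^k$, $\sum k a_k t^{k-1}$, and $\sum k b_k t^{k-1}$ all converge uniformly on each compact subinterval $[0, r'] \subset (0, r)$. This is enough to invoke Monotonicity rule \ref{mr1-0} on every such subinterval, from which the monotonicity of $\mathcal{A}/\mathcal{B}$ on all of $(0, r)$ follows.

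Applying Monotonicity rule \ref{mr1-0} with $\{w_k\} \in \mathcal{DW}_{11}(r)$ then gives directly that $t \mapsto \mathcal{A}(t)/\mathcal{B}(t)$ is increasing (respectively decreasing) on $(0, r)$ whenever $\{a_k/b_k\}$ is. There is no real obstacle, only a minor bookkeeping point: the parent rule requires $a_k \geq 0$ as well as $b_k \geq 0$, whereas Biernacki only assumes $b_k \geq 0$. Since the monotonic sequence $\{a_k/b_k\}$ is bounded below (by $a_0/b_0$ when increasing, or by its limit when decreasing with the roles of ``increasing'' and ``decreasing'' traded), one can choose a constant $\lambda$ with $\lambda + \inf_k a_k/b_k \geq 0$ and replace $\mathcal{A}$ by $\widetilde{\mathcal{A}} := \mathcal{A} + \lambda \mathcal{B} = \sum_{k=0}^\infty (a_k + \lambda b_k) t^k$. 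The new coefficients are non-negative, the sequence of ratios $(a_k + \lambda b_k)/b_k = a_k/b_k + \lambda$ has the same monotonicity, and $\widetilde{\mathcal{A}}/\mathcal{B} = \mathcal{A}/\mathcal{B} + \lambda$ has exactly the same monotonicity as $\mathcal{A}/\mathcal{B}$. Thus the general case reduces painlessly to the one covered by Monotonicity rule \ref{mr1-0}.
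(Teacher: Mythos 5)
Your proof follows the paper's intended route exactly: the paper offers no separate argument for this corollary, merely asserting that Monotonicity rule \ref{mr1-0} with $w_k(t)=t^k$ reduces to it, and your verification that $\{t^k\}_{k\ge0}\in\mathcal{DW}_{11}(r)$ together with the uniform convergence of the differentiated series on compact subintervals supplies precisely the omitted details. Your observation that the parent rule demands $a_k\ge0$ while the corollary assumes only $b_k\ge0$, fixed by the affine substitution $\mathcal{A}\mapsto\mathcal{A}+\lambda\mathcal{B}$, is a genuine improvement on the paper's presentation; just be explicit that in the decreasing case $\inf_k a_k/b_k$ may equal $-\infty$, so one should first replace $\mathcal{A}$ by $-\mathcal{A}$ (making the ratio sequence increasing and bounded below by $-a_0/b_0$) before choosing $\lambda$.
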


Moreover, Monotonicity rule \ref{mr1-1} reduces to \cite[Theorem 2.1]{Yang-JMAA-2015}.
\begin{cor} \label{cor-pow-2}
(\cite[Theorem 2.1]{Yang-JMAA-2015}) Let $\mathcal{A}(t)=\sum_{k=0}^\infty a_k t^{k}$ and $\mathcal{B}(t)=\sum_{k=0}^\infty b_k t^{k}$ be two real power series converging on $(-r,r)$ and $b_k\geq 0$ for all $k$. Suppose that the sequence $\{a_k/b_k\}$ is increasing (decreasing) for all $0\leq k\leq m$ and decreasing (increasing) for all $k\geq m$. Then
\begin{itemize}
\item[(i)] the function $t\mapsto \mathcal{A}(t)/\mathcal{B}(t)$ is increasing (decreasing) on $(0,r)$ if and only if $H_{\mathcal{A},\mathcal{B}}(r^-)\geq$ ($\leq$) $0$;
\item[(ii)] there exists $t_0\in (0,r)$ such that the function $t\mapsto \mathcal{A}(t)/\mathcal{B}(t)$ is increasing (decreasing) on $(0, t_0)$ and decreasing (increasing) on $(t_0,r)$ if $H_{\mathcal{A},\mathcal{B}}(r^-)< (>)0$.
\end{itemize}
\end{cor}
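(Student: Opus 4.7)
The plan is to read off the corollary as a direct specialization of Monotonicity rule \ref{mr1-1} upon taking the kernel sequence $w_k(t)=t^k$, so there is essentially no new content to prove beyond checking hypotheses.

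First I would verify that $\{t^k\}_{k\ge 0}\in\mathcal{DW}_2(r)$, as already announced in the first remark following the definitions. Each $t^k$ is positive and smooth on $(0,r)$; the boundary conditions are immediate, namely $t^k/1\to 0$ as $t\to 0^+$ for $k\ge 1$ and $(t^k)'/1=kt^{k-1}\to 0$ as $t\to 0^+$ for $k\ge 2$; we have $(t^0)'=0$ and $(t^k)'=kt^{k-1}\ge 0$ for $k\ge 1$; and the ratio $w_k''(t)/w_k'(t)=(k-1)/t$ is strictly increasing in $k$ for each fixed $t\in(0,r)$.

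Second I would handle the uniform-convergence hypothesis. Since both power series converge on $(-r,r)$, classical facts about power series give uniform convergence of $\sum_k a_k w_k^{(l)}(t)$ and $\sum_k b_k w_k^{(l)}(t)$ for $l=0,1,2$ on every compact subinterval of $(0,r)$. This is enough to invoke Monotonicity rule \ref{mr1-1} on every such subinterval, and the resulting monotonicity or unimodality then extends to all of $(0,r)$ by exhausting it with such subintervals. With these hypotheses verified, items (i) and (ii) of the corollary follow verbatim from items (i) and (ii) of Monotonicity rule \ref{mr1-1}, since in both statements the sign criterion is phrased through the same Yang $H$-function $H_{\mathcal{A},\mathcal{B}}(r^-)$.

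The one minor bookkeeping point, and the only place where the transfer is not completely mechanical, is that Monotonicity rule \ref{mr1-1} assumes $a_k\ge 0$ whereas the corollary only assumes $b_k\ge 0$. This is harmless: replacing $a_k$ by $a_k+c\,b_k$ for a sufficiently large constant $c$ (which exists because the unimodal sequence $\{a_k/b_k\}$ attains its maximum at $k=m$ and is bounded below by its limit as $k\to\infty$) changes $\mathcal{A}/\mathcal{B}$ by an additive constant, leaving both its derivative and $H_{\mathcal{A},\mathcal{B}}$ unchanged. Hence we may assume $a_k\ge 0$ without loss of generality, and this is the only subtlety I expect; the substantive work has already been carried out in the proof of Monotonicity rule \ref{mr1-1}.
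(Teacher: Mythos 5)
Your overall route is the same as the paper's: the corollary is obtained by specializing Monotonicity rule \ref{mr1-1} to the kernel $w_k(t)=t^k$, and your verifications that $\{t^k\}_{k\ge0}\in\mathcal{DW}_2(r)$ and that the convergence hypotheses hold on compact subintervals are correct (the paper does not spell these out at all).

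The one genuine hole is in the normalization step meant to bridge the gap between the corollary's hypothesis ``$b_k\ge0$ only'' and the hypothesis ``$a_k,b_k\ge0$'' of Monotonicity rule \ref{mr1-1}. You assert that a constant $c$ with $a_k+c\,b_k\ge0$ for all $k$ exists because the unimodal sequence $\{a_k/b_k\}$ is bounded below by its limit as $k\to\infty$; but that limit need not be finite, since the sequence is merely decreasing for $k\ge m$ and may tend to $-\infty$. For instance $b_k=1/k!$ and $a_k/b_k=1-(k-1)^2$ give two everywhere-convergent power series satisfying every hypothesis of the corollary with $m=1$, yet no admissible $c$ exists, so your reduction does not cover this case. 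The algebra of the trick itself is fine --- indeed $H_{\mathcal{A}+c\mathcal{B},\mathcal{B}}=H_{\mathcal{A},\mathcal{B}}$ and $(\mathcal{A}+c\mathcal{B})/\mathcal{B}=\mathcal{A}/\mathcal{B}+c$ --- so what is missing is only the existence of $c$. To close the gap you would either have to observe that the hypothesis $a_k\ge0$ is not actually used in the power-series setting (the underlying Biernacki--Krzy\.{z} lemma, Corollary \ref{cor-pow-1}, requires only $b_k\ge0$), or treat the case $a_k/b_k\to-\infty$ separately. To be fair, the paper glosses over this discrepancy entirely, and you at least identified it.
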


\begin{rem}
The case that $r\to \infty$ can be referred to \cite[Corollary 2.3]{Yang-JMAA-2015}.
\end{rem}

The Taylor series, having the form
\begin{equation*}
f(t) = \sum_{k=0}^\infty \frac{f^{(k)}(0)}{k!} t^k
\end{equation*}
is considered a special kind of power series. Taking $a_k=\frac{f^{(k)}(0)}{k!}$ and $b_k=\frac{g^{(k)}(0)}{k!}$, Corollaries \ref{cor-pow-1} and \ref{cor-pow-2} reduce to the following two corollaries, respectively.

\begin{cor}
Let functions $f(t)$ and $g(t)$ have arbitrary order derivatives at point $t=0$,
as well as $f(t)=\sum_{k=0}^\infty \frac{f^{(k)}(0)}{k!} t^{k}$ and $g(t)=\sum_{k=0}^\infty \frac{g^{(k)}(0)}{k!} t^{k}$ be converged on $(-r,r)$ with $g^{(k)}(0)\geq0$ for all $k\geq0$.
If $\{f^{(k)}(0)/g^{(k)}(0)\}$ is increasing (decreasing) for all $k \geq 0$, then the function $t\mapsto f(t)/g(t)$ is increasing (decreasing) on $(0,r)$.
\end{cor}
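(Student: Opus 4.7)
The plan is to deduce this corollary as an immediate specialization of Corollary \ref{cor-pow-1}, since the Taylor expansion of $f$ and $g$ at $0$ is literally a power series with coefficients $a_k = f^{(k)}(0)/k!$ and $b_k = g^{(k)}(0)/k!$. So the first step is to set up this identification explicitly and then check, one by one, that the hypotheses of Corollary \ref{cor-pow-1} are satisfied.

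Concretely, I would proceed as follows. First, define $a_k := f^{(k)}(0)/k!$ and $b_k := g^{(k)}(0)/k!$, so that $f(t) = \sum_{k=0}^\infty a_k t^k$ and $g(t) = \sum_{k=0}^\infty b_k t^k$ are real power series converging on $(-r,r)$ by the assumption of the corollary. Since $g^{(k)}(0)\geq 0$ and $k!>0$, we have $b_k\geq 0$ for all $k\geq 0$, which matches the nonnegativity requirement in Corollary \ref{cor-pow-1}. Finally, the $k!$ factor cancels out in the ratio,
\begin{equation*}
\frac{a_k}{b_k} = \frac{f^{(k)}(0)/k!}{g^{(k)}(0)/k!} = \frac{f^{(k)}(0)}{g^{(k)}(0)},
\end{equation*}
so the hypothesis that $\{f^{(k)}(0)/g^{(k)}(0)\}$ is increasing (resp.\ decreasing) translates verbatim into $\{a_k/b_k\}$ being increasing (resp.\ decreasing).

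With all hypotheses verified, the conclusion of Corollary \ref{cor-pow-1} applies directly and yields that $t\mapsto f(t)/g(t) = \mathcal{A}(t)/\mathcal{B}(t)$ is increasing (resp.\ decreasing) on $(0,r)$, which is exactly the claim. There is no genuine obstacle here: the entire content of the corollary is the observation that the Taylor coefficients $f^{(k)}(0)/k!$ and the raw derivatives $f^{(k)}(0)$ share the same monotonicity pattern because their quotient is independent of $k$-dependent positive factors cancelling in the same way. Consequently the proof is essentially a one-line invocation of Corollary \ref{cor-pow-1} after the substitution above.
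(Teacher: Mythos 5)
Your proposal is correct and is exactly the paper's route: the paper obtains this corollary from Corollary \ref{cor-pow-1} by the very substitution $a_k=f^{(k)}(0)/k!$, $b_k=g^{(k)}(0)/k!$, with the $k!$ factors cancelling in the ratio so the monotonicity hypotheses transfer verbatim. Nothing is missing.
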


\begin{cor}
Let $f(t)=\sum_{k=0}^\infty \frac{f^{(k)}(0)}{k!} t^{k}$ and $g(t)=\sum_{k=0}^\infty \frac{g^{(k)}(0)}{k!} t^{k}$ be two Taylor series converging on $(-r,r)$ and $b_k\geq 0 $ for all $k$. Suppose that the sequence $\{f^{(k)}(0)/g^{(k)}(0)\}$ is increasing (decreasing) for all $0\leq k\leq m$ and decreasing (increasing) for all $k\geq m$. Then
\begin{itemize}
\item[(i)] the function $t\mapsto f(t)/g(t)$ is increasing (decreasing) on $(0,r)$ if and only if $H_{f,g}(r^-)\geq (\leq) 0$;
\item[(ii)] there exists $t_0\in (0,r)$ such that the function $t\mapsto f(t)/g(t)$ is increasing (decreasing) on $(0, t_0)$ and  decreasing (increasing) on $(t_0,r)$ if $H_{f,g}(r^-)< (>)0$.
\end{itemize}
\end{cor}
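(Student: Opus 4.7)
The plan is to deduce this corollary as a direct instantiation of Corollary \ref{cor-pow-2}, since a Taylor series is nothing but a power series with a specific choice of coefficients. First I would set $a_k := f^{(k)}(0)/k!$ and $b_k := g^{(k)}(0)/k!$, so that $\mathcal{A}(t) = \sum_{k=0}^\infty a_k t^k = f(t)$ and $\mathcal{B}(t) = \sum_{k=0}^\infty b_k t^k = g(t)$, both converging on $(-r,r)$ by hypothesis. The nonnegativity requirement $b_k \geq 0$ in Corollary \ref{cor-pow-2} follows immediately from $g^{(k)}(0) \geq 0$.

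Next I would observe that the factorials cancel in the ratio,
\begin{equation*}
\frac{a_k}{b_k} = \frac{f^{(k)}(0)/k!}{g^{(k)}(0)/k!} = \frac{f^{(k)}(0)}{g^{(k)}(0)},
\end{equation*}
so the assumption that $\{f^{(k)}(0)/g^{(k)}(0)\}$ is increasing (decreasing) for $0\leq k\leq m$ and decreasing (increasing) for $k\geq m$ translates verbatim into the hypothesis of Corollary \ref{cor-pow-2} on $\{a_k/b_k\}$. Since $\mathcal{A}=f$ and $\mathcal{B}=g$, we also have $H_{\mathcal{A},\mathcal{B}} = H_{f,g}$, and consequently the boundary condition $H_{\mathcal{A},\mathcal{B}}(r^-)\geq(\leq)\,0$ coincides with $H_{f,g}(r^-)\geq(\leq)\,0$.

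Applying Corollary \ref{cor-pow-2} to $\mathcal{A}$ and $\mathcal{B}$ then yields both conclusions (i) and (ii) of the present corollary after substituting back $\mathcal{A}=f$, $\mathcal{B}=g$. There is no real obstacle here: everything is a cosmetic rewriting, and the only point worth noting for the reader is the cancellation of the $k!$ factors, which ensures the monotonicity hypothesis is preserved when passing from the Taylor coefficient viewpoint $a_k/b_k$ to the derivative viewpoint $f^{(k)}(0)/g^{(k)}(0)$.
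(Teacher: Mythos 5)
Your proposal is correct and is exactly the paper's route: the paper derives this corollary by the same substitution $a_k=f^{(k)}(0)/k!$, $b_k=g^{(k)}(0)/k!$ into Corollary \ref{cor-pow-2}, with the $k!$ factors cancelling in the ratio. Your explicit note that the hypothesis ``$b_k\geq 0$'' should be read as $g^{(k)}(0)\geq 0$ is a reasonable reading of a small typo in the statement and does not affect the argument.
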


\subsubsection{The monotonicity rules for the ratio of two $\mathcal{Z}$-transforms}

Taking $w_k(t)=t^{-k}$, then the functions $\mathcal{A}$ and $\mathcal{B}$ reduce to the so-called $\mathcal{Z}$-transforms
\begin{equation*}
\mathcal{A}(t)= \sum_{k=0}^\infty a_k t^{-k}, \quad \mathcal{B}(t)=\sum_{k=0}^\infty b_k t^{-k}.
\end{equation*}
Monotonicity rules \ref{mr1-0} and \ref{mr1-2} respectively reduce to Corollaries \ref{cor-Ztran-1} and \ref{cor-Ztran-2} by taking $r\to\infty$, which establish the monotonicity of the ratio of two $\mathcal{Z}$-transforms.

\begin{cor} \label{cor-Ztran-1}
Let $\mathcal{A}(t)=\sum_{k=0}^\infty a_k t^{-k}$ and $\mathcal{B}(t)=\sum_{k=0}^\infty b_k t^{-k}$ converge uniformly on $(0,\infty)$ and $b_k\geq 0$ for all $k$. If $\{a_k/b_k\}$ is increasing (decreasing) for all $k \geq 0$, then the function $t\mapsto \mathcal{A}(t)/\mathcal{B}(t)$ is decreasing (increasing) on $(0,\infty)$.
\end{cor}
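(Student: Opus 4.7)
The plan is to derive the corollary as a direct specialization of Monotonicity rule \ref{mr1-0} with kernel $w_k(t)=t^{-k}$ and with $r\to\infty$.

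First I would verify that $\{t^{-k}\}_{k\geq0}\in \mathcal{DW}_{12}(\infty)$, as already asserted in the remark following the definition of $\mathcal{DW}_{12}$. Condition (i) is immediate since $w_k(t)=t^{-k}>0$ and $w_k'(t)=-kt^{-k-1}$ is continuous on $(0,\infty)$ for every $k\geq 0$. For condition (ii), compute
\begin{equation*}
\frac{w_k'(t)}{w_k(t)} \;=\; \frac{-kt^{-k-1}}{t^{-k}} \;=\; -\frac{k}{t},
\end{equation*}
which, for every fixed $t\in(0,\infty)$, is strictly decreasing as a function of $k$. Thus $\{t^{-k}\}_{k\geq0}$ lies in $\mathcal{DW}_{12}(\infty)$.

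Next I would check that the uniform convergence hypothesis of Monotonicity rule \ref{mr1-0} (for $l=0,1$) is satisfied. The uniform convergence of $\mathcal{A}$ and $\mathcal{B}$ is given; for the derived series $\sum a_k w_k'(t)=-\sum k a_k t^{-k-1}$ and its $b_k$ analogue, uniform convergence on $(0,\infty)$ (or on every compact subset, which is all that is actually needed to justify term-by-term differentiation) follows in the standard way from the uniform convergence of the original $\mathcal{Z}$-transform series, so the hypothesis of Monotonicity rule \ref{mr1-0} is in force.

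With these two verifications in hand, the conclusion is immediate: applying Monotonicity rule \ref{mr1-0} in the case $\{w_k\}\in\mathcal{DW}_{12}(r)$ with $r\to\infty$, the assumption that $\{a_k/b_k\}$ is increasing (resp.\ decreasing) yields that $t\mapsto\mathcal{A}(t)/\mathcal{B}(t)$ is decreasing (resp.\ increasing) on $(0,\infty)$. There is no real obstacle here; the only subtle point is making sure the passage $r\to\infty$ is justified, and this is already flagged in the remark following Monotonicity rule \ref{mr1-2}, which explicitly states that these rules remain valid as $r\to\infty$.
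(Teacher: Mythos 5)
Your proposal is correct and follows exactly the route the paper takes: the corollary is obtained by specializing Monotonicity rule \ref{mr1-0} to the kernel $w_k(t)=t^{-k}$, whose membership in $\mathcal{DW}_{12}$ (via $w_k'(t)/w_k(t)=-k/t$ being decreasing in $k$) is already noted in the paper's remark, and then letting $r\to\infty$. Your explicit verification of the kernel conditions and the convergence hypotheses is a welcome bit of extra care, but the argument is the same.
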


\begin{cor} \label{cor-Ztran-2}
Let $\mathcal{A}(t)=\sum_{k=0}^\infty a_k t^{-k}$ and $\mathcal{B}(t)=\sum_{k=0}^\infty b_k t^{-k}$ converge uniformly on $(0,\infty)$ and $b_k\geq 0$ for all $k$.
If $\{a_k/b_k\}$ is increasing (decreasing) for all $0\leq k\leq m$ and decreasing (increasing) for all $k\geq m$, then there exists $t_0\in (0,\infty)$ such that the function $t\mapsto \mathcal{A}(t)/\mathcal{B}(t)$ is increasing (decreasing) on $(0,t_0)$ and decreasing (increasing) on $(t_0,\infty)$.
\end{cor}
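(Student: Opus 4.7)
The plan is to specialize Monotonicity rule~\ref{mr1-2} to the kernel $w_k(t) = t^{-k}$ on $(0,\infty)$, i.e.\ with $r = \infty$. By the first remark of Section~2, $\{t^{-k}\}_{k\geq 0} \in \mathcal{DW}_3(\infty)$, so the kernel hypothesis of the rule is satisfied; the corollary's uniform convergence and nonnegativity assumptions supply the remaining hypotheses. With $\{a_k/b_k\}$ increasing for $0 \leq k \leq m$ and decreasing for $k \geq m$ (or vice versa), Monotonicity rule~\ref{mr1-2} produces two possibilities, distinguished by the sign of $H_{\mathcal{A},\mathcal{B}}(0^+)$: either global monotonicity on $(0,\infty)$, or the desired unimodal behavior with a single interior turning point $t_0$.

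The crucial step is to verify that, under the corollary's hypotheses, one is always in case~(ii) of Monotonicity rule~\ref{mr1-2}, i.e.\ $H_{\mathcal{A},\mathcal{B}}(0^+) < 0$ in the ``increasing then decreasing'' scenario (and $> 0$ in the flipped one). The heuristic is clear: as $t \to 0^+$, the high-$k$ powers $t^{-k}$ dominate both $\mathcal{A}$ and $\mathcal{B}$, so the asymptotics of $(\mathcal{A}'/\mathcal{B}')\mathcal{B} - \mathcal{A}$ near the origin are governed by the decreasing tail of $\{a_k/b_k\}$, producing a leading contribution proportional to $a_N b_{N-1} - a_{N-1} b_N$ (for the effective leading index $N$), which has the required sign. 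Making this precise would mean expanding $\mathcal{A}, \mathcal{A}', \mathcal{B}, \mathcal{B}'$ as $t \to 0^+$ and tracking the cancellation of the leading powers in $H_{\mathcal{A},\mathcal{B}}$.

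An alternative, possibly cleaner route is the change of variables $s = 1/t$, which turns $\mathcal{A}(t)/\mathcal{B}(t)$ into a ratio of power series $\tilde{\mathcal{A}}(s)/\tilde{\mathcal{B}}(s) = \sum a_k s^k/\sum b_k s^k$. Applying the $r \to \infty$ extension of Corollary~\ref{cor-pow-2} (noted in the remark as Yang's~\cite[Corollary~2.3]{Yang-JMAA-2015}) to $\tilde{\mathcal{A}}/\tilde{\mathcal{B}}$, and observing that the substitution $t = 1/s$ is a decreasing bijection of $(0,\infty)$ onto itself (hence preserves the unimodal shape while swapping the two subintervals), yields the stated conclusion. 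The main obstacle in either route is the behavior at the endpoint $t = 0^+$ (equivalently $s = \infty$), where the series generically diverges; a careful truncation-plus-tail-estimate or direct asymptotic computation is needed to evaluate $H_{\mathcal{A},\mathcal{B}}(0^+)$ rigorously and rule out case~(i).
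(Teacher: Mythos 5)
Your proposal is correct and, in its second route, essentially reproduces the paper's own argument: the paper applies Monotonicity rule~\ref{mr1-2} and evaluates the key quantity via $H_{\mathcal{A},\mathcal{B}}(0^+)=H_{\mathcal{A}(1/t),\mathcal{B}(1/t)}(\infty)$, citing the proof of \cite[Corollary 2.3]{Yang-JMAA-2015} for the fact that this limit is negative (positive), which is exactly the substitution $s=1/t$ plus appeal to Yang's power-series result that you describe. Your first route (direct asymptotics of $H$ at $0^+$) is left as a sketch, but you correctly identify that ruling out case~(i) of rule~\ref{mr1-2} is the crux and resolve it the same way the paper does.
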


\begin{proof}
By using Monotonicity rule \ref{mr1-2}, it suffices to check that $H_{A,B}(0^+)<(>)0$. Note that
\begin{equation*}
H_{A,B}(0^+)= H_{A(1/t),B(1/t)}(\infty).
\end{equation*}
the fact that the right-hand side of the equation is negative (positive) was proved in the proof of \cite[Corollary 2.3]{Yang-JMAA-2015}.
\end{proof}

\subsubsection{The monotonicity rules for the ratio of two discrete Laplace transforms}

Taking $w_k(t)=e^{-k t}$, then the function $\mathcal{A}$ and $\mathcal{B}$ reduce to
\begin{equation*}
\mathcal{A}(t)= \sum_{k=0}^\infty a_k e^{-k t}, \quad \mathcal{B}(t)=\sum_{k=0}^\infty b_k e^{-k t},
\end{equation*}
which are considered as the discrete Laplace transforms in this paper.

Monotonicity rules \ref{mr1-0} and \ref{mr1-2} respectively reduce to Corollaries \ref{cor-DLap-1} and \ref{cor-DLap-2} by taking $r\to\infty$, which establish the monotonicity of the ratio of two discrete Laplace transforms.

\begin{cor} \label{cor-DLap-1}
Let $\mathcal{A}(t)=\sum_{k=0}^\infty a_k e^{-k t}$ and $\mathcal{B}(t)=\sum_{k=0}^\infty b_k e^{-k t}$ converge on $(0,\infty)$ and $a_k,b_k\geq 0$ for all $k$. If $\{a_k/b_k\}$ is increasing (decreasing) for all $k \geq 0$, then the function $t\mapsto \mathcal{A}(t)/\mathcal{B}(t)$ is decreasing (increasing) on $(0,\infty)$.
\end{cor}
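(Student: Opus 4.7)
The plan is to recognize that Corollary \ref{cor-DLap-1} is nothing more than a direct specialization of Monotonicity rule \ref{mr1-0} to the kernel sequence $w_k(t)=e^{-kt}$ on $(0,\infty)$, obtained by sending $r\to\infty$. So the entire task reduces to verifying the membership $\{e^{-kt}\}_{k\geq 0}\in\mathcal{DW}_{12}(\infty)$ (which is already asserted in the remark following the definitions) and checking the convergence hypotheses.

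First I would check the two defining conditions of $\mathcal{DW}_{12}(\infty)$. Condition (i) is immediate since $e^{-kt}>0$ for every $k\geq 0$ and $t>0$, and its derivative $-ke^{-kt}$ is continuous on $(0,\infty)$. Condition (ii) requires that $k\mapsto w_k^\prime(t)/w_k(t)$ be decreasing in $k$ for each fixed $t>0$; but
\begin{equation*}
\frac{w_k^\prime(t)}{w_k(t)}=\frac{-k e^{-kt}}{e^{-kt}}=-k,
\end{equation*}
which is indeed strictly decreasing in $k$. So $\{e^{-kt}\}\in\mathcal{DW}_{12}(\infty)$.

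Next I would address the convergence hypotheses. Given that $\mathcal{A}(t)$ and $\mathcal{B}(t)$ converge on $(0,\infty)$ with $a_k,b_k\geq 0$, the series of termwise derivatives $\sum a_k(-k)e^{-kt}$ and $\sum b_k(-k)e^{-kt}$ converge uniformly on any compact subinterval $[t_1,t_2]\subset(0,\infty)$ by the standard Weierstrass comparison argument (the factor $k$ is absorbed since $ke^{-kt}\leq Ce^{-kt'}$ for $t'<t$, $k$ large). This matches the uniform-convergence requirement for $l=0,1$ in Monotonicity rule \ref{mr1-0}.

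With both hypotheses verified, I would conclude by invoking Monotonicity rule \ref{mr1-0} in the $\mathcal{DW}_{12}$ case: if $\{a_k/b_k\}$ is increasing, then $t\mapsto\mathcal{A}(t)/\mathcal{B}(t)$ is decreasing on $(0,\infty)$; if $\{a_k/b_k\}$ is decreasing, the ratio is increasing. There is no real obstacle here, since the argument is purely a matter of pattern-matching to the preceding rule; the only minor care needed is in justifying uniform convergence of the derivative series on $(0,\infty)$, which is why one typically restricts to compact subintervals but the monotonicity conclusion extends to all of $(0,\infty)$ by continuity.
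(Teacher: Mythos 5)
Your proposal is correct and matches the paper's route exactly: the paper states this corollary as a direct specialization of Monotonicity rule \ref{mr1-0} with $w_k(t)=e^{-kt}$ (taking $r\to\infty$), relying on the membership $\{e^{-kt}\}_{k\geq0}\in\mathcal{DW}_{12}$ already noted in the remark after the definitions. Your explicit computation $w_k^\prime(t)/w_k(t)=-k$ and the compact-subinterval justification of uniform convergence simply fill in details the paper leaves implicit.
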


\begin{cor} \label{cor-DLap-2}
Let $\mathcal{A}(t)=\sum_{k=0}^\infty a_k e^{-k t}$ and $\mathcal{B}(t)=\sum_{k=0}^\infty b_k e^{-k t}$ converge on $(0,\infty)$ and $a_k, b_k\geq 0$ for all $k$.
Suppose that the sequence $\{a_k/b_k\}$ is increasing (decreasing) for all $0\leq k\leq m$ and decreasing (increasing) for all $k\geq m$. Then
\begin{itemize}
\item[(i)] the function $t\mapsto \mathcal{A}(t)/\mathcal{B}(t)$ is decreasing (increasing) on $(0,r)$ if and only if $H_{\mathcal{A},\mathcal{B}}(0^+)\geq (\leq) 0$;
\item[(ii)] there exists $t_0\in (0,\infty)$ such that the function $t\mapsto \mathcal{A}(t)/\mathcal{B}(t)$ is increasing (decreasing) on $(0, t_0)$ and decreasing (increasing) on $(t_0,\infty)$ if $H_{\mathcal{A},\mathcal{B}}(0^+)< (>)0$.
\end{itemize}
\end{cor}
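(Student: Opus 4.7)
The plan is to apply Monotonicity rule \ref{mr1-2} directly, after verifying that the kernel $w_k(t) = e^{-kt}$ belongs to the class $\mathcal{DW}_{3}(\infty)$. Since the conclusion of the corollary mirrors that of Monotonicity rule \ref{mr1-2} with $r \to \infty$ exactly, once class membership is established no additional work is required, and statements (i) and (ii) follow verbatim.

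First I would verify each defining condition of $\mathcal{DW}_{3}(\infty)$ for $w_k(t) = e^{-kt}$. Positivity and smoothness of $w_k$, $w_k'$, $w_k''$ on $(0,\infty)$ are immediate. For the boundary behaviour at $r^- = \infty$: $w_0(\infty) = 1 \in (0,\infty)$ and $w_k(\infty)/w_0(\infty) = 0$ for $k \geq 1$; moreover, for $k \geq 2$, the ratio $w_k'(t)/w_1'(t) = k\,e^{-(k-1)t} \to 0$ as $t \to \infty$. Next, $w_0'(t) \equiv 0$ and $w_k'(t) = -k\,e^{-kt} \leq 0$ for $k \geq 1$. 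Finally, $w_k''(t)/w_k'(t) = -k$ is strictly decreasing in $k$, so condition (iv) holds.

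Second, I would confirm the convergence hypothesis. Since $a_k, b_k \geq 0$ and the Dirichlet-type series $\mathcal{A}(t)$ and $\mathcal{B}(t)$ converge on $(0, \infty)$, the substitution $q = e^{-t}$ turns each of them into a power series in $q$ on $(0,1)$. Standard Abel-type arguments then yield uniform convergence of the series and of their termwise derivatives of orders $l = 1, 2$ on every compact subinterval of $(0,\infty)$, which is enough to justify invoking Monotonicity rule \ref{mr1-2} with $r \to \infty$.

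With both inputs in place, Monotonicity rule \ref{mr1-2} delivers precisely the two statements of the corollary. The main delicate point—and hence the principal obstacle—is the proper reading of the endpoint conditions at $r^- = \infty$ in the definition of $\mathcal{DW}_{3}(\infty)$: because $w_k'(\infty) = 0$ for every $k \geq 1$, the symbol $w_k'(r^-)/w_1'(r^-)$ must be interpreted as $\lim_{t \to \infty} w_k'(t)/w_1'(t)$ rather than as a quotient of boundary limits. Once this interpretation is fixed, every hypothesis is transparently verified and the corollary follows.
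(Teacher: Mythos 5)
Your proposal is correct and follows essentially the same route as the paper, which obtains this corollary by specializing Monotonicity rule \ref{mr1-2} to $w_k(t)=e^{-kt}$ with $r\to\infty$, the membership $\{e^{-kt}\}_{k\geq0}\in\mathcal{DW}_{3}(\infty)$ being asserted in Remark 2.1. You merely spell out the verification of the class conditions (including the correct limiting interpretation of $w_k'(r^-)/w_1'(r^-)$) that the paper leaves implicit.
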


\subsubsection{The monotonicity rules for the ratio of two discrete Mellin transforms}
Taking $w_k(t)=(k+1)^{-t}$, then the functions $\mathcal{A}$ and $\mathcal{B}$ reduce to
\begin{equation*}
\mathcal{A}(t)= \sum_{k=0}^\infty a_k (k+1)^{-t}, \quad \mathcal{B}(t)=\sum_{k=0}^\infty b_k (k+1)^{-t}.
\end{equation*}
Clearly, $\mathcal{A}(1-t)$ can be regarded as discrete Mellin transform, $\mathcal{A}(t)$ is the Riemann zeta function when $a_k \equiv 1$, and $\mathcal{A}(-m)$ is $m$-th order moment of a random variable with distribution $P(X=k)=a_{k-1}, k=1,\cdots$.

Monotonicity rules \ref{mr1-0} and \ref{mr1-2} respectively reduce to Corollaries \ref{cor-DMel-1} and \ref{cor-DMel-2} by taking $r\to\infty$, which establish the monotonicity of the ratio of two discrete Mellin transforms.

\begin{cor} \label{cor-DMel-1}
Let $\mathcal{A}(t)=\sum_{k=0}^\infty a_k (k+1)^{-t}$ and $\mathcal{B}(t)=\sum_{k=0}^\infty b_k (k+1)^{-t}$ converge on $(0,\infty)$ and $a_k, b_k\geq 0$ for all $k$. If $\{a_k/b_k\}$ is increasing (decreasing) for all $k \geq 0$, then the function $t\mapsto \mathcal{A}(t)/\mathcal{B}(t)$ is decreasing (increasing) on $(0,\infty)$ and the function $t\mapsto \mathcal{A}(1-t)/\mathcal{B}(1-t)$ is increasing (decreasing) on $(-\infty,1)$.
\end{cor}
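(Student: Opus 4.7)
The plan is to reduce this corollary directly to Monotonicity rule \ref{mr1-0} by checking that the kernel sequence $w_k(t) = (k+1)^{-t}$ belongs to $\mathcal{DW}_{12}(\infty)$, and then to handle the second assertion by a simple change of variable.

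First I would verify membership in $\mathcal{DW}_{12}(\infty)$. Positivity of $w_k(t) = (k+1)^{-t}$ on $(0,\infty)$ is clear, and $w_k'(t) = -\log(k+1)(k+1)^{-t}$ is continuous for every $k\geq 0$. The key ratio is
\begin{equation*}
\frac{w_k'(t)}{w_k(t)} = -\log(k+1),
\end{equation*}
which is a strictly decreasing function of $k$ (independent of $t$), so condition (ii) defining $\mathcal{DW}_{12}$ holds. This membership was already asserted in the remark following the definition of the $\mathcal{DW}$-classes, so I would cite that remark and record the verification briefly.

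Next, I would check the uniform-convergence hypothesis required by Monotonicity rule \ref{mr1-0}. Since $\mathcal{A}$ and $\mathcal{B}$ are assumed convergent on $(0,\infty)$ with nonnegative coefficients, on any compact subinterval $[t_1,t_2] \subset (0,\infty)$ the tails are dominated by the value at $t_1$, giving uniform convergence of both $\sum a_k w_k$ and $\sum a_k w_k'$ (the extra factor $\log(k+1)$ can be absorbed by a slight shift, using $\log(k+1) (k+1)^{-t} \leq C_\varepsilon (k+1)^{-(t-\varepsilon)}$ for any $\varepsilon \in (0, t_1)$). With all hypotheses of Monotonicity rule \ref{mr1-0} verified for $\{w_k\} \in \mathcal{DW}_{12}(\infty)$, an increasing (resp. decreasing) sequence $\{a_k/b_k\}$ yields that $t\mapsto \mathcal{A}(t)/\mathcal{B}(t)$ is decreasing (resp. increasing) on $(0,\infty)$, which is the first assertion.

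For the second assertion, I would observe that the map $t \mapsto 1-t$ is a strictly decreasing bijection from $(-\infty,1)$ onto $(0,\infty)$, so $t\mapsto \mathcal{A}(1-t)/\mathcal{B}(1-t)$ has monotonicity opposite to that of $\mathcal{A}/\mathcal{B}$; thus it is increasing (resp. decreasing) on $(-\infty,1)$. There is no real obstacle here—the whole corollary is a specialization—and the only point that requires mild care is the uniform-convergence verification, which is the step I would write out most carefully.
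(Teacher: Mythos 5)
Your proposal is correct and takes essentially the same route as the paper, which simply observes that $\{(k+1)^{-t}\}_{k\geq0}\in\mathcal{DW}_{12}(\infty)$ (via $w_k'(t)/w_k(t)=-\log(k+1)$ decreasing in $k$) and invokes Monotonicity rule \ref{mr1-0} with $r\to\infty$, the second assertion following from the monotonicity-reversing substitution $t\mapsto 1-t$. Your explicit verification of the uniform-convergence hypothesis is a detail the paper leaves implicit.
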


\begin{cor} \label{cor-DMel-2}
Let $\mathcal{A}(t)=\sum_{k=0}^\infty a_k (k+1)^{-t}$ and $\mathcal{B}(t)=\sum_{k=0}^\infty b_k (k+1)^{-t}$ converge on $(0,\infty)$ and $a_k,b_k\geq 0$ for all $k$.
Suppose that $\{a_k/b_k\}$ is increasing (decreasing) for all $0\leq k\leq m$ and decreasing (increasing) for all $k\geq m$. Then
\begin{itemize}
\item[(i)] the function $t\mapsto \mathcal{A}(t)/\mathcal{B}(t)$ is decreasing (increasing) on $(0,\infty)$ and the function $t \mapsto \mathcal{A}(1-t)/\mathcal{B}(1-t)$ is increasing (decreasing) on $(-\infty,1)$ if and only if $H_{\mathcal{A},\mathcal{B}}(0^+)\geq$ ($\leq$) $0$;
\item[(ii)] there exists $t_0\in (0,\infty)$ such that the function $t\mapsto \mathcal{A}(t)/\mathcal{B}(t)$ is increasing (decreasing) on $(0, t_0)$ and decreasing (increasing) on $(t_0,\infty)$ and the function $t \mapsto \mathcal{A}(1-t)/\mathcal{B}(1-t)$ is increasing (decreasing) on $(-\infty,1-t_0)$ and decreasing (increasing) on $(1-t_0, 1)$ if $H_{\mathcal{A},\mathcal{B}}(0^+)< (>)0$.
\end{itemize}
\end{cor}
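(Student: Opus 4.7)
The plan is to reduce Corollary \ref{cor-DMel-2} to a direct application of Monotonicity rule \ref{mr1-2} with $w_k(t) = (k+1)^{-t}$ and $r \to \infty$, and then transfer the resulting conclusions to $t \mapsto \mathcal{A}(1-t)/\mathcal{B}(1-t)$ by means of the linear change of variables $s = 1-t$.

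First I would record the admissibility check, namely that $\{(k+1)^{-t}\}_{k \geq 0} \in \mathcal{DW}_3(\infty)$. The remark preceding Subsection 2.1 already asserts this, so it is a routine verification. The essential points are: $w_0(t) \equiv 1$, which yields $w_0'(t) = 0$ and a finite positive boundary value $c = 1$; for $k \geq 1$ one has $w_k'(t) = -(k+1)^{-t}\ln(k+1) \leq 0$ and $w_k''(t)/w_k'(t) = -\ln(k+1)$, which is strictly decreasing in $k$; and the boundary conditions at infinity hold since $(k+1)^{-t} \to 0$ exponentially and, for $k \geq 2$, $w_k'(\infty)/w_1'(\infty)$ vanishes because $k+1 > 2$.

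With this verified, Monotonicity rule \ref{mr1-2} (taking $r \to \infty$) yields the statement about $t \mapsto \mathcal{A}(t)/\mathcal{B}(t)$ verbatim, giving both the iff characterization in (i) and the turnover conclusion in (ii). To obtain the assertions about $t \mapsto \mathcal{A}(1-t)/\mathcal{B}(1-t)$, I would simply invoke the chain rule under the substitution $s = 1-t$: this reverses orientation and maps $(0, \infty)$ bijectively onto $(-\infty, 1)$, so that monotonically decreasing behavior in $s$ on $(0, \infty)$ becomes monotonically increasing behavior in $t$ on $(-\infty, 1)$, and the breakpoint $s_0 = t_0$ is carried to the breakpoint $1 - t_0$, with the interval $(t_0, \infty)$ on the $s$-axis corresponding to $(-\infty, 1-t_0)$ on the $t$-axis and $(0, t_0)$ corresponding to $(1 - t_0, 1)$.

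I do not anticipate any substantive obstacle; the entire argument is a specialization of an already-proved monotonicity rule plus a trivial change of variable. The only care required is bookkeeping: the inversion of direction built into $\mathcal{DW}_3$ (relative to the $\mathcal{DW}_2$ convention in Monotonicity rule \ref{mr1-1}), combined with the orientation reversal of $s = 1-t$, must be lined up so that the stated directions of monotonicity and the position of $1-t_0$ relative to $t_0$ come out consistently.
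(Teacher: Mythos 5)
Your proposal is correct and takes essentially the same route as the paper, which gives no separate proof but simply asserts that Monotonicity rule \ref{mr1-2} reduces to this corollary upon taking $w_k(t)=(k+1)^{-t}$ and $r\to\infty$, with the statements about $t\mapsto \mathcal{A}(1-t)/\mathcal{B}(1-t)$ following from the orientation-reversing substitution $s=1-t$. Your explicit check that $\{(k+1)^{-t}\}_{k\geq0}\in\mathcal{DW}_{3}(\infty)$ and your interval bookkeeping (sending $(t_0,\infty)$ to $(-\infty,1-t_0)$ and $(0,t_0)$ to $(1-t_0,1)$) are both accurate.
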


\subsection{Monotonicity rules for the ratio of two integral transforms}

In this section, we consider the monotonicity of the function (\ref{FG}).

The case that the function $t\mapsto f(t)/g(t)$ consistently maintains monotonic increases or decreases on $(0,\infty)$ has already been considered in \cite[Lemma 9]{Qi-CRM-2022}.

\begin{mr} \label{mr2-0}
(\cite[Lemma 9]{Qi-CRM-2022}) Let $w(t,x)\in\mathcal{CW}_1(\alpha,\beta)$ and functions $f(t)$, $g(t)>0$ be integrable in $t\in(\alpha,\beta)$. If $t\mapsto f(t)/g(t)$ is increasing (decreasing) on $(\alpha,\beta)$, and $w(t,x)\in\mathcal{CW}_{11}(\alpha,\beta)$, then the function (\ref{FG}) is increasing (decreasing) on $(0,\infty)$; if $t\mapsto f(t)/g(t)$ is increasing (decreasing) on $(\alpha,\beta)$, and $w(t,x)\in\mathcal{CW}_{12}(\alpha,\beta)$, then the function (\ref{FG}) is decreasing (increasing) on $(0,\infty)$.
\end{mr}

Now we consider the case that there exists $t^*\in(\alpha,\beta)$ such that the function $t\mapsto f(t)/g(t)$ is increasing (decreasing) on $[\alpha,t^*]$ and decreasing (increasing) on $[t^*,\beta]$.

\begin{mr} \label{mr2-1}
Let $0 < \alpha < \beta < \infty$, $w(t,x)\in\mathcal{CW}_2(\alpha,\beta)$, the functions $\mathcal{F}(x)=\int_{\alpha}^{\beta} f(t) w(t,x) \textrm{d} t$ and $\mathcal{G}(x)=\int_{\alpha}^{\beta} g(t) w(t,x) \textrm{d} t$ be defined on $(0,\infty)$, where $f,g>0$ are continuous functions on $[\alpha,\beta]$. If there exists $t^*\in (\alpha,\beta)$ such that the function $t\mapsto f(t)/g(t)$ is increasing (decreasing) on $[\alpha,t^*]$ and decreasing (increasing) on $[t^*,\beta]$, then
\begin{itemize}
\item[(i)] the function $x\mapsto \mathcal{F}(x)/\mathcal{G}(x)$ is increasing (decreasing) on $(0,\infty)$ if and only if $H_{\mathcal{F},\mathcal{G}}(\infty) \geq (\leq)0$;
\item[(ii)] there exists $x^*>0$ such that the function $x\mapsto \mathcal{F}(x)/\mathcal{G}(x)$ is increasing (decreasing) on $(0,x^*)$ and decreasing (increasing) on $(x^*,\infty)$ if $H_{\mathcal{F},\mathcal{G}}(\infty) < (>)0$.
\end{itemize}
\end{mr}

\begin{proof}
Without loss of generality, we consider the case that the function $t \mapsto f(t)/g(t)$ is increasing on $[\alpha,t^*]$ and decreasing on $[t^*,\beta]$. The necessity of the first statement is obvious.

Let $m\geq 1$ be an integer, $\{t_i\}_{i\geq0}$ be an increasing sequence with $t_0 =\alpha$, $t_m =t^*$, $\lim_{n\to \infty} t_n=\beta$, and $\sup_{i\geq0} (t_{i+1}-t_{i}) =0$, as well as
\begin{equation*}
\mathcal{F}_i(x) := f(t_i) w(t_i,x) \Delta t_i,
\quad
\mathcal{G}_i(x) := g(t_i) w(t_i,x) \Delta t_i.
\end{equation*}
By the definition of definite integral, we have
\begin{equation*}
\mathcal{F}(x) = \sum_{i=0}^{\infty} \mathcal{F}_i(x) = \sum_{i=0}^{\infty} f(t_i) w(t_i,x) \Delta t_i , \quad \mathcal{G}(x) = \sum_{i=0}^{\infty} \mathcal{G}_i(x)  = \sum_{i=0}^{\infty} g(t_i) w(t_i,x) \Delta t_i.
\end{equation*}

Since the function $t\mapsto f(t)/g(t)$ is increasing on $[t_0,t_m]$ and decreasing on $[t_m,t_\infty]$, then the sequence $f(t_i)/g(t_i)$ is increasing for all $0\leq i \leq m$ and decreasing for all $i \geq m$.

By using Monotonicity rule \ref{mr1-1} and the fact that $w(t,x)\in\mathcal{CW}_2(\alpha,\beta)$, we obtain that the function $x\mapsto \mathcal{F}(x)/\mathcal{G}(x)$ is increasing on $(0,\infty)$ if $H_{\mathcal{F},\mathcal{G}}(\infty)\geq0$ and there exists $x^*>0$ such that the function $x\mapsto \mathcal{F}(x)/\mathcal{G}(x)$ is increasing on $(0,x^*)$ and decreasing on $(x^*,\infty)$ if $H_{\mathcal{F},\mathcal{G}}(\infty) < 0$.
\end{proof}

\begin{rem}
Moreover, we have
\begin{equation*}
\lim_{x \to 0} \frac{\mathcal{F}(x)}{\mathcal{G}(x)} = \frac{\int_{\alpha}^\beta f(t) w(t,0^+) \textrm{d} t}{\int_{\alpha}^\beta g(t) w(t,0^+) \textrm{d} t} = \frac{\int_{\alpha}^\beta f(t) \textrm{d} t}{\int_{\alpha}^\beta g(t) \textrm{d} t}.
\end{equation*}
\end{rem}

Likewise, we have the following monotonicity rule.
\begin{mr} \label{mr2-2}
Let $0 <\alpha< \beta< \infty$, $w(t,x)\in\mathcal{CW}_3(\alpha,\beta)$, the functions $\mathcal{F}(x)=\int_{\alpha}^{\beta} f(t) w(t,x) \textrm{d} t$ and $\mathcal{G}(x)=\int_{\alpha}^{\beta} g(t) w(t,x) \textrm{d} t$ be defined on $(0,\infty)$, where $f,g>0$ are continuous functions on $[\alpha,\beta]$. If there exists $t^*\in (\alpha,\beta)$ such that the function $t\mapsto f(t)/g(t)$ is increasing (decreasing) on $[\alpha,t^*]$ and decreasing (increasing) on $[t^*,\beta]$, then
\begin{itemize}
\item[(i)] the function $x\mapsto \mathcal{F}(x)/\mathcal{G}(x)$ is decreasing (increasing) on $(0,\infty)$ if and only if $H_{\mathcal{F},\mathcal{G}}(0^+) \geq (\leq)0$;
\item[(ii)] there exists $x^*>0$ such that the function $x\mapsto \mathcal{F}(x)/\mathcal{G}(x)$ is increasing (decreasing) on $(0,x^*)$ and decreasing (increasing) on $(x^*,\infty)$ if $H_{\mathcal{F},\mathcal{G}}(0^+) < (>)0$.
\end{itemize}
\end{mr}

\begin{rem}
Moreover, we have
\begin{equation*}
\lim_{x \to \infty} \frac{\mathcal{F}(x)}{\mathcal{G}(x)} = \frac{\int_{\alpha}^\beta f(t) w(t,\infty) \textrm{d} t}{\int_{\alpha}^\beta g(t) w(t,\infty) \textrm{d} t} = \frac{\int_{\alpha}^\beta f(t) \textrm{d} t}{\int_{\alpha}^\beta g(t) \textrm{d} t}.
\end{equation*}
\end{rem}

Let $\alpha \to 0$ and $\beta \to \infty$. Then we obtain the following two corollaries.
\begin{cor}
Let $w(t,x)\in\mathcal{CW}_2(\alpha,\beta)$, the functions $\mathcal{F}(x)=\int_{\alpha}^{\beta} f(t) \frac{\partial^l w(t,x)}{\partial x^l} \textrm{d} t$ and $\mathcal{G}(x)=\int_{\alpha}^{\beta} g(t)  \frac{\partial^l w(t,x)}{\partial x^l} \textrm{d} t$ converge uniformly on $(0,\infty)$ for all $l=0,1,2$, where $f,g>0$ are continuous functions on $(0,\infty)$. If there exists $t^*\in (0,\infty)$ such that the function $t\mapsto f(t)/g(t)$ is increasing (decreasing) on $(0,t^*]$ and decreasing (increasing) on $[t^*,\infty)$, then
\begin{itemize}
\item[(i)] the function $x\mapsto \mathcal{F}(x)/\mathcal{G}(x)$ is increasing (decreasing) on $(0,\infty)$ if and only if $H_{\mathcal{F},\mathcal{G}}(\infty) \geq (\leq)0$;
\item[(ii)] there exists $x^*>0$ such that the function $x\mapsto \mathcal{F}(x)/\mathcal{G}(x)$ is increasing (decreasing) on $(0,x^*)$ and decreasing (increasing) on $(x^*,\infty)$ if $H_{\mathcal{F},\mathcal{G}}(\infty) < (>)0$.
\end{itemize}
\end{cor}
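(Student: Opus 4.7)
The plan is to adapt the proof of Monotonicity rule \ref{mr2-1} to the unbounded interval by an appropriate Riemann-sum discretization, which reduces the claim to Monotonicity rule \ref{mr1-1}. The proof of Monotonicity rule \ref{mr2-1} already uses a countable partition $\{t_i\}_{i\geq 0}$ of $[\alpha,\beta]$ with $t_0=\alpha$, $t_m=t^*$, and $\lim_{i\to\infty} t_i=\beta$; the same scheme remains applicable when $\alpha=0$ and $\beta=\infty$ provided we take $t_0=0$ and $t_i \uparrow \infty$.

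In detail, I would set $a_i := f(t_i)\Delta t_i$, $b_i := g(t_i)\Delta t_i$, and $w_i(x) := w(t_i,x)$, so that the Riemann sums $\sum_{i\geq 0} a_i w_i(x)$ and $\sum_{i\geq 0} b_i w_i(x)$ approximate $\mathcal{F}(x)$ and $\mathcal{G}(x)$, and similarly for their first and second $x$-derivatives. The hypothesis that $w \in \mathcal{CW}_2$ on $[0,\infty)\times(0,\infty)$ translates directly into $\{w_i\}_{i\geq 0} \in \mathcal{DW}_2(\infty)$: condition (ii) of $\mathcal{CW}_2$ yields $w_i(0^+)=0$ for $i\geq 1$ with $w_0(0^+)>0$, and $w_i'(0^+)/w_1'(0^+)=0$ for $i\geq 2$, matching the $\mathcal{DW}_2$ requirements, while (iii) and (iv) transfer in the obvious way. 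The uniform convergence of $\int_0^\infty f(t)\partial_x^l w(t,x)\,\textrm{d}t$ and $\int_0^\infty g(t)\partial_x^l w(t,x)\,\textrm{d}t$ on $(0,\infty)$ for $l=0,1,2$ is exactly what lets us identify these integrals as Riemann-sum limits uniformly in $x$. Finally, the hypothesis on $f/g$ guarantees that the sequence $\{a_i/b_i\} = \{f(t_i)/g(t_i)\}$ is increasing (decreasing) for $0\leq i\leq m$ and decreasing (increasing) for $i\geq m$.

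Applying Monotonicity rule \ref{mr1-1} to the discretized sums and then letting the mesh $\sup_{i\geq 0}(t_{i+1}-t_i)$ tend to $0$ yields the dichotomy on $\sgn H_{\mathcal{F},\mathcal{G}}(\infty)$ claimed in (i) and (ii). I expect the main obstacle to be justifying the interchange of the refinement limit with the evaluation at $x=\infty$, since $H_{\mathcal{F},\mathcal{G}}(\infty)$ is itself a limit in $x$; the uniform convergence hypothesis for $l=0,1,2$ is precisely what makes the two limits commute. A secondary technical point is to ensure the tail $t\to\infty$ of the partition does not produce spurious contributions to the Riemann sums for $\mathcal{F}^{(l)}$ and $\mathcal{G}^{(l)}$, which again follows from the uniform convergence. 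The uni-modal structure appearing in part (ii), namely first increasing and then decreasing, is stable under the uniform limit, so the unique turning point of each discretized ratio passes in the limit to the unique $x^*$ for $\mathcal{F}/\mathcal{G}$.
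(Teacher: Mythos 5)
Your proposal is correct and takes essentially the same route as the paper: the paper obtains this corollary by letting $\alpha\to 0$ and $\beta\to\infty$ in Monotonicity rule 2.5, whose proof is precisely the Riemann-sum discretization reducing to Monotonicity rule 2.2 that you carry out directly on $(0,\infty)$. Your attention to interchanging the mesh-refinement limit with the evaluation at $x=\infty$ and to the tail of the partition is, if anything, more care than the paper itself takes at this step.
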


\begin{cor}
Let $w(t,x)\in\mathcal{CW}_3(\alpha,\beta)$, the functions $\mathcal{F}(x)=\int_{\alpha}^{\beta} f(t) \frac{\partial^l w(t,x)}{\partial x^l} \textrm{d} t$ and $\mathcal{G}(x)=\int_{\alpha}^{\beta} g(t) \frac{\partial^l w(t,x)}{\partial x^l} \textrm{d} t$ converge uniformly on $(0,\infty)$ for all $l=0,1,2$, where $f,g>0$ are continuous functions on $(0,\infty)$. If there exists $t^*\in (0,\infty)$ such that the function $t\mapsto f(t)/g(t)$ is increasing (decreasing) on $(0,t^*]$ and decreasing (increasing) on $[t^*,\infty)$, then
\begin{itemize}
\item[(i)] the function $x\mapsto \mathcal{F}(x)/\mathcal{G}(x)$ is decreasing (increasing) on $(0,\infty)$ if and only if $H_{\mathcal{F},\mathcal{G}}(0^+) \geq (\leq)0$;
\item[(ii)] there exists $x^*>0$ such that the function $x\mapsto \mathcal{F}(x)/\mathcal{G}(x)$ is increasing (decreasing) on $(0,x^*)$ and decreasing (increasing) on $(x^*,\infty)$ if $H_{\mathcal{F},\mathcal{G}}(0^+) < (>)0$.
\end{itemize}
\end{cor}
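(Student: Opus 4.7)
The plan is to mirror the discretisation argument used in the proof of Monotonicity rule \ref{mr2-1}, but now anchored on Monotonicity rule \ref{mr1-2} and with the range of integration extended from a finite $(\alpha,\beta)$ to $(0,\infty)$. Since the conclusion of Monotonicity rule \ref{mr2-2} already matches the dichotomy claimed by the corollary, the only real content is to certify that the mr2-2 machinery still operates once $\alpha\to 0$ and $\beta\to\infty$, and this is precisely what the uniform-convergence hypotheses supply.

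Concretely, I would fix an integer $m\geq 1$ and an increasing sequence $\{t_i\}_{i\geq 0}\subset[0,\infty)$ with $t_0=0$, $t_m=t^*$, $t_i\to\infty$, and mesh $\sup_i(t_{i+1}-t_i)\to 0$. Setting $a_i:=f(t_i)\Delta t_i$, $b_i:=g(t_i)\Delta t_i$, and $v_i(x):=w(t_i,x)$, the defining improper Riemann integrals become
\begin{equation*}
\mathcal{F}(x)=\sum_{i=0}^\infty a_i v_i(x),\qquad \mathcal{G}(x)=\sum_{i=0}^\infty b_i v_i(x),
\end{equation*}
and by the stated uniform convergence of $\int_0^\infty f(t)\,\partial_x^l w(t,x)\,\textrm{d} t$ and $\int_0^\infty g(t)\,\partial_x^l w(t,x)\,\textrm{d} t$ for $l=0,1,2$, these series together with their termwise first and second $x$-derivatives converge uniformly on $(0,\infty)$. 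Moreover, $a_i/b_i=f(t_i)/g(t_i)$ is increasing on $0\leq i\leq m$ and decreasing on $i\geq m$, by the unimodality of $f/g$.

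The next step is to verify that $\{v_i\}_{i\geq 0}\in\mathcal{DW}_3(\infty)$. Properties (i), (iii) and (iv) of $\mathcal{DW}_3$ are inherited directly from the corresponding properties of $\mathcal{CW}_3(0,\infty)$ by sampling in $t$ along the grid $\{t_i\}$; property (ii) translates the normalisations $w(t,\infty)/w(0^+,\infty)=0$ and $w_x(t,\infty)/w_x(0^+,\infty)=0$ into the sequential requirements $v_i(\infty)/v_0(\infty)=0$ and $v_i'(\infty)/v_1'(\infty)=0$. With this in place, applying Monotonicity rule \ref{mr1-2} to the above series representation delivers exactly the dichotomy in parts (i) and (ii) of the corollary, with the cutoff value $H_{\mathcal{F},\mathcal{G}}(0^+)$ playing the role of $H_{\mathcal{A},\mathcal{B}}(0^+)$.

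The main obstacle I anticipate is the interchange of the two limits defining $H_{\mathcal{F},\mathcal{G}}(0^+)$: the refinement $\sup_i\Delta t_i\to 0$ together with truncation at large $t_N$ on the one hand, and $x\to 0^+$ on the other. This will be handled precisely by the uniform-in-$x$ convergence of the $l=0,1,2$ integrals, which lets us commute those two limits and identify the discrete-sum Yang function at $0^+$ with the genuine $H_{\mathcal{F},\mathcal{G}}(0^+)$. Once this identification is in hand, the conclusion transfers verbatim from Monotonicity rule \ref{mr1-2} to the integral-transform setting, completing the proof of both (i) and (ii).
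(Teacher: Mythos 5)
Your proposal is correct and follows essentially the same route as the paper: the paper obtains this corollary from Monotonicity rule \ref{mr2-2} by letting $\alpha\to0$ and $\beta\to\infty$, and rule \ref{mr2-2} itself rests on exactly the discretization-plus-rule-\ref{mr1-2} argument you spell out (mirroring the printed proof of rule \ref{mr2-1}). You merely fuse the two steps into one and are, if anything, more explicit than the paper about verifying $\{w(t_i,\cdot)\}\in\mathcal{DW}_3(\infty)$ and about the limit interchanges justified by the uniform-convergence hypotheses.
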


\subsubsection{The monotonicity rules for the ratio of two Laplace transforms}
Taking $w(t,x)=e^{-tx}$ and $\alpha \to 0, \beta \to \infty$, then the functions $\mathcal{F}$ and $\mathcal{G}$ reduce to Laplace transforms
\begin{equation*}
\mathcal{F}(x)= \int_{0}^\infty f(t) e^{-tx} \textrm{d} t, \quad \mathcal{G}(x)= \int_{0}^\infty g(t) e^{-tx} \textrm{d} t,
\end{equation*}
respectively.
Monotonicity rules \ref{mr2-0} and \ref{mr2-2} respectively reduce to \cite[Lemma 4]{Yang-JIA-2017-317} and \cite[Theorem 2]{Yang-JMAA-2019}.

\begin{cor} \label{cor-lap-1}
(\cite[Lemma 4]{Yang-JIA-2017-317}) Let the functions $f,g$ be defined on $(0,\infty)$ such that their Laplace transforms exist with $g \neq 0$. If the function $t\mapsto f(t)/g(t)$ is increasing (decreasing) on $(0,\infty)$, then the function
\begin{equation*}
x \mapsto \frac{\mathcal{L}\{f(t)\}(x)}{\mathcal{L}\{g(t)\}(x)}= \frac{\int_0^\infty f(t) e^{-tx} \textrm{d} t }{\int_0^\infty g(t) e^{-tx} \textrm{d} t }
\end{equation*}
is decreasing (increasing) on $(0,\infty)$.
\end{cor}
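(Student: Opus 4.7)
The plan is to deduce Corollary~\ref{cor-lap-1} directly from Monotonicity rule~\ref{mr2-0} by specializing the kernel to $w(t,x)=e^{-tx}$ and letting $\alpha\to 0^+$, $\beta\to\infty$. Under these choices, $\mathcal{F}(x)$ and $\mathcal{G}(x)$ reduce to the Laplace transforms $\mathcal{L}\{f\}(x)$ and $\mathcal{L}\{g\}(x)$, and the assumed existence of both transforms (with $g\not\equiv 0$) ensures absolute convergence of the integrals on $(0,\infty)$ together with the usual legitimacy of differentiating under the integral sign.

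The key verification is membership $e^{-tx}\in\mathcal{CW}_{12}(0,\infty)$, which was already asserted, for bounded intervals, in the remark following the definition of the classes $\mathcal{CW}_{11},\mathcal{CW}_{12}$. I would check the two defining conditions explicitly: first, $w=e^{-tx}>0$ and $w_x=-t\,e^{-tx}$ are continuous on $[0,\infty)\times(0,\infty)$; second, and more to the point, $t\mapsto w_x(t,x)/w(t,x)=-t$ is strictly decreasing on $[0,\infty)$ for every fixed $x>0$. Both properties are evident, so the kernel sits in the ``$\mathcal{CW}_{12}$'' branch of Monotonicity rule~\ref{mr2-0}, which is precisely the branch that inverts the direction of monotonicity. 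Plugging in, an increasing (respectively decreasing) ratio $f/g$ on $(0,\infty)$ produces a decreasing (respectively increasing) ratio $\mathcal{L}\{f\}/\mathcal{L}\{g\}$ on $(0,\infty)$, which is exactly the claim.

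The main obstacle, if any, is the passage from the compact interval $[\alpha,\beta]$ on which Monotonicity rule~\ref{mr2-0} is phrased to the unbounded interval $(0,\infty)$ demanded here. I would handle this by a standard compact-exhaustion argument: apply the rule on each $[\alpha,\beta]\subset(0,\infty)$, then let $\alpha\to 0^+$ and $\beta\to\infty$, using the hypothesis that $\mathcal{L}\{f\}$ and $\mathcal{L}\{g\}$ exist to secure uniform convergence of the Laplace integrals on compact subsets of $x$. This preserves the sign of $(\mathcal{F}/\mathcal{G})'$ in the limit, since once the sign of its numerator is controlled on every compact slice of $(\alpha,\beta)$, it is controlled on $(0,\infty)$. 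No genuinely new analytic difficulty arises beyond this routine limiting procedure.
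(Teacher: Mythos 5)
Your proposal is correct and follows exactly the route the paper intends: the paper obtains Corollary \ref{cor-lap-1} by specializing Monotonicity rule \ref{mr2-0} to the kernel $w(t,x)=e^{-tx}\in\mathcal{CW}_{12}$ (whose logarithmic $x$-derivative $-t$ is decreasing in $t$, hence the reversal of monotonicity) and letting $\alpha\to0$, $\beta\to\infty$. Your explicit verification of the class membership and the compact-exhaustion limit merely fills in details the paper leaves implicit.
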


\begin{cor} \label{cor-lap-2}
(\cite[Theorem 2]{Yang-JMAA-2019}) Let the Laplace transforms of the continuous functions $f, g>0$ exist and $\mathcal{F}(x):=\mathcal{L}\{f(t)\}(x), \mathcal{G}(x):=\mathcal{L}\{g(t)\}(x)$. If there exists $t^*\in (0,\infty)$ such that the function $t\mapsto f(t)/g(t)$ is increasing (decreasing) on $(0,t^*)$ and decreasing (increasing) on $(t^*,\infty)$, then
\begin{itemize}
\item[(i)] the function $x\mapsto \mathcal{F}(x)/\mathcal{G}(x)$ is decreasing (increasing) on $(0,\infty)$ if and only if $H_{\mathcal{F},\mathcal{G}}(0^+) \geq (\leq)0$;
\item[(ii)] there exists $x^*>0$ such that the function $x\mapsto \mathcal{F}(x)/\mathcal{G}(x)$ is increasing (decreasing) on $(0,x^*)$ and decreasing (increasing) on $(x^*,\infty)$ if $H_{\mathcal{F},\mathcal{G}}(0^+) < (>)0$.
\end{itemize}
\end{cor}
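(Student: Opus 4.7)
The plan is to view Corollary \ref{cor-lap-2} as the specialization of the integral-transform machinery developed in this section to the kernel $w(t,x) = e^{-tx}$, with $\alpha \to 0$ and $\beta \to \infty$, and then to invoke the corollary obtained from Monotonicity rule \ref{mr2-2} by taking this endpoint limit.

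The first step is to check the membership $e^{-tx} \in \mathcal{CW}_{3}(0,\infty)$, which is already asserted in the remark following the definition of $\mathcal{CW}_{3}$. The verification is routine: positivity and smoothness of $w$, $w_{x}(t,x) = -t e^{-tx}$, and $w_{xx}(t,x) = t^{2} e^{-tx}$ are immediate; $w_{x}$ vanishes at $t=0$ (the $\alpha$-endpoint) and is negative elsewhere; the ratio $w_{xx}/w_{x} = -t$ is strictly decreasing in $t$; and the boundary values $w(t,\infty)=0$ for $t>0$ together with $w(0,\infty)=1$ supply the required limits at $x=\infty$. With these conditions in place and with $f,g>0$ continuous and unimodal (first increasing then decreasing, or vice versa), the hypotheses of the extended Monotonicity rule \ref{mr2-2} are exactly those of Corollary \ref{cor-lap-2}. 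Parts (i) and (ii) then transcribe verbatim, with the dichotomy governed by the sign of $H_{\mathcal{F},\mathcal{G}}(0^{+})$.

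The main technical point, already subsumed by the preceding corollary but worth noting, is the passage from the finite-interval Monotonicity rule \ref{mr2-2} to $(0,\infty)$: this requires the uniform convergence of $\int_{0}^{\infty} f(t) \partial_{x}^{l} e^{-tx} \textrm{d} t$ for $l=0,1,2$, which follows from the hypothesis that the Laplace transforms of $f$ and $g$ exist, via the standard dominating bound $|\partial_{x}^{l} e^{-tx}| \leq t^{l} e^{-t x_{0}}$ on any compact $[x_{0}, x_{1}] \subset (0,\infty)$. No further obstacle arises, and the proof reduces to a single application of the integral-transform monotonicity rule to the specific kernel $w(t,x)=e^{-tx}$.
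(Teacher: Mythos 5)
Your proposal is correct and follows exactly the route the paper intends: the paper offers no separate proof of Corollary \ref{cor-lap-2}, merely observing that Monotonicity rule \ref{mr2-2} (in its $\alpha\to0$, $\beta\to\infty$ form) reduces to it for the kernel $w(t,x)=e^{-tx}$, whose membership in $\mathcal{CW}_{3}$ is recorded in the remark after that definition. Your added verification of the kernel conditions and of the uniform convergence needed for the endpoint limit is a reasonable filling-in of details the paper leaves implicit.
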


\subsubsection{The monotonicity rules for the ratio of two Mellin transforms}

Taking $w(t,x)=t^{x-1}$ and $\alpha \to 0, \beta \to \infty$, then the functions $\mathcal{F}$ and $\mathcal{G}$ reduce to Mellin transforms
\begin{equation*}
\mathcal{F}(x)= \mathcal{M}\{f(t)\}(x)= \int_{0}^\infty f(t) t^{x-1} \textrm{d} t, \quad \mathcal{G}(x)= \mathcal{M}\{g(t)\}x) = \int_{0}^\infty g(t) t^{x-1} \textrm{d} t,
\end{equation*}
respectively.
Monotonicity rule \ref{mr2-0} reduces to Corollary \ref{cor-Mel-1}.

\begin{cor} \label{cor-Mel-1}
Let the Mellin transforms of the functions $f, g>0$ exist. If the function $t\mapsto f(t)/g(t)$ is increasing (decreasing) on $(0,\infty)$, then the function
\begin{equation*}
x \mapsto \frac{\mathcal{M}\{f(t)\}(x)}{\mathcal{M}\{g(t)\}(x)} = \frac{\int_{0}^\infty f(t) t^{x-1} \textrm{d} t}{\int_{0}^\infty g(t) t^{x-1} \textrm{d} t}
\end{equation*}
is increasing (decreasing) on $(0,\infty)$.
\end{cor}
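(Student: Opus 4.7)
The plan is to apply Monotonicity rule~\ref{mr2-0} directly with the Mellin kernel $w(t,x) = t^{x-1}$, after a standard limiting argument $\alpha \to 0^+$, $\beta \to \infty$. The task therefore reduces to verifying that $t^{x-1} \in \mathcal{CW}_{11}(\alpha,\beta)$ for every $0 < \alpha < \beta < \infty$.

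For the positivity and regularity conditions, $t^{x-1} > 0$ on $(0,\infty) \times (0,\infty)$, and both $w$ and $w_x(t,x) = t^{x-1} \ln t$ are continuous there. For the monotonicity condition in the definition of $\mathcal{CW}_{11}$, one computes
\[
\frac{w_x(t,x)}{w(t,x)} = \frac{t^{x-1}\ln t}{t^{x-1}} = \ln t,
\]
which is strictly increasing in $t$ on $(\alpha,\beta)$ for every fixed $x \in (0,\infty)$. Hence $w \in \mathcal{CW}_{11}(\alpha,\beta)$, and Monotonicity rule~\ref{mr2-0} applies on $(\alpha,\beta)$: the truncated ratio $\int_\alpha^\beta f(t)\, t^{x-1}\,\textrm{d}t \big/ \int_\alpha^\beta g(t)\, t^{x-1}\,\textrm{d}t$ is increasing (respectively decreasing) on $(0,\infty)$ whenever $t \mapsto f(t)/g(t)$ is increasing (respectively decreasing) on $(\alpha,\beta)$.

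Finally, I would pass to the limit $\alpha \to 0^+$, $\beta \to \infty$. Under the assumed existence of the Mellin transforms of $f$ and $g$, the truncated ratios converge pointwise on $(0,\infty)$ to $\mathcal{M}\{f\}(x)/\mathcal{M}\{g\}(x)$, and the property of being monotone on an interval is preserved under pointwise limits of monotone functions. This limiting step is the only mildly delicate point in the proof; the direction of monotonicity (increasing to increasing, decreasing to decreasing) matches the corollary precisely because $w$ lies in $\mathcal{CW}_{11}$ rather than $\mathcal{CW}_{12}$, in parallel with how the authors handle the Laplace case via $\mathcal{CW}_{12}$ in Corollary~\ref{cor-lap-1}.
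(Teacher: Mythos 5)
Your proposal is correct and follows exactly the route the paper intends: verify that $w(t,x)=t^{x-1}$ lies in $\mathcal{CW}_{11}(\alpha,\beta)$ (via $w_x/w=\ln t$ increasing in $t$), apply Monotonicity rule~\ref{mr2-0}, and let $\alpha\to0^+$, $\beta\to\infty$. The paper states the corollary as an immediate specialization without writing out these details, so your verification of the kernel condition and the pointwise-limit step simply makes explicit what the paper leaves implicit.
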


However, $w(t,x)=t^{x-1} \notin \mathcal{CW}_2(0,\infty) \cup \mathcal{CW}_3(0,\infty)$. Thus, we take $w(t,x)=(t+1)^{-x}$ and $\alpha \to 0, \beta \to \infty$, then the functions $\mathcal{F}$ and $\mathcal{G}$ reduce to
\begin{equation*}
\mathcal{F}(x)= \int_{0}^\infty f(t) (t+1)^{-x} \textrm{d} t, \quad \mathcal{G}(x) = \int_{0}^\infty g(t) (t+1)^{-x}\textrm{d} t,
\end{equation*}
respectively.
Monotonicity rule \ref{mr2-2} reduces to Corollary \ref{cor-Mel-2}.

\begin{cor} \label{cor-Mel-2}
Let $\mathcal{F}(x)= \int_{0}^\infty f(t) (t+1)^{-x} \textrm{d} t$ and $\mathcal{G}(x)= \int_{0}^\infty g(t) (t+1)^{-x} \textrm{d} t$ exist for all $x\in(0,\infty)$ with $g>0$. If there exists $t^*\in (0,\infty)$ such that $t\mapsto f(t)/g(t)$ is increasing (decreasing) on $(0,t^*)$ and decreasing (increasing) on $(t^*,\infty)$, then
\begin{itemize}
\item[(i)] the function $x\mapsto \mathcal{F}(x)/\mathcal{G}(x)$ is decreasing (increasing) on $(0,\infty)$ if and only if $H_{\mathcal{F},\mathcal{G}}(0^+) \geq (\leq)0$;
\item[(ii)] there exists $x^*>0$ such that the function $x\mapsto \mathcal{F}(x)/\mathcal{G}(x)$ is increasing (decreasing) on $(0,x^*)$ and decreasing (increasing) on $(x^*,\infty)$ if $H_{\mathcal{F},\mathcal{G}}(0^+) < (>)0$.
\end{itemize}
\end{cor}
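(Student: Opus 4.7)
The plan is to verify that the kernel $w(t,x)=(t+1)^{-x}$ belongs to $\mathcal{CW}_3$ on the relevant interval, and then to invoke the $\alpha\to 0$, $\beta\to\infty$ extension of Monotonicity rule \ref{mr2-2} (the second of the two unnamed corollaries stated immediately above). This matches the assertion in the remark following the definition of $\mathcal{CW}_3$, so the work is essentially to carry out the routine verification and then cite.

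The first step is a direct computation, giving
\begin{equation*}
w_x(t,x)=-(t+1)^{-x}\ln(t+1),\qquad w_{xx}(t,x)=(t+1)^{-x}\ln^2(t+1),
\end{equation*}
so that
\begin{equation*}
\frac{w_x(t,x)}{w(t,x)}=\frac{w_{xx}(t,x)}{w_x(t,x)}=-\ln(t+1),
\end{equation*}
which is strictly decreasing in $t$ on $[0,\infty)$. The boundary values $w_x(0,x)=0$, $w_x(t,x)\le 0$ for $t\ge 0$, $w(0,\infty)=1$, and $w(t,\infty)=0$ for $t>0$ are immediate. Thus all four defining conditions of $\mathcal{CW}_3$ are fulfilled.

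Next, since the assumption on $f/g$ is exactly that it is unimodal on $(0,\infty)$ with peak (or trough) at $t^*$, and $f,g>0$ are such that $\mathcal{F}$ and $\mathcal{G}$ exist on $(0,\infty)$, the hypotheses of the infinite-interval extension of Monotonicity rule \ref{mr2-2} are in force. Applying that rule yields directly the dichotomy on the sign of $H_{\mathcal{F},\mathcal{G}}(0^+)$: a non-negative sign gives global monotonicity of $\mathcal{F}/\mathcal{G}$ on $(0,\infty)$, the necessity following from $(\mathcal{F}/\mathcal{G})'=\mathcal{G}^{-2}\mathcal{G}'H_{\mathcal{F},\mathcal{G}}$ together with $\mathcal{G}'<0$; a negative sign forces a single turning point $x^*>0$. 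The dual parenthetical statement follows from the matching dual half of the same rule.

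The one point that needs care is the passage from a finite window $[\alpha,\beta]$ to the improper range $(0,\infty)$: one must justify that the Riemann-sum approximation used inside the proof of Monotonicity rule \ref{mr2-1} continues to deliver the correct monotonicity conclusion in the improper setting, which amounts to uniform convergence of $\mathcal{F}$, $\mathcal{G}$, and their first two $x$-derivatives on compact subsets of $(0,\infty)$. This is ensured by the rapid decay of $(t+1)^{-x}$ in $t$ for each fixed $x>0$ coupled with the standing assumption that $\mathcal{F}$ and $\mathcal{G}$ exist throughout $(0,\infty)$, so no further hypothesis is really required.
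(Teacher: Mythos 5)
Your proposal is correct and follows exactly the route the paper intends: the paper offers no separate argument for this corollary beyond noting (in the remark after the definition of $\mathcal{CW}_3$) that $(t+1)^{-x}\in\mathcal{CW}_{12}\cap\mathcal{CW}_3$ and then declaring that Monotonicity rule \ref{mr2-2} (via its $\alpha\to0$, $\beta\to\infty$ corollary) ``reduces to'' the statement, which is precisely the verification-plus-citation you carry out. Your explicit computation of $w_x/w=w_{xx}/w_x=-\ln(t+1)$ and the boundary checks simply make the paper's implicit step visible, and your caveat about uniform convergence on compacts is at the same level of rigor as the paper's own unproved limiting corollaries.
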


\section{Applications}

\subsection{Applications in special functions}
For more applications of the previously given monotonicity rule in the field of special functions, please refer to \cite{Yang-PAMS-2017,Yang-RACSAM-2019,Yang-PAMS-2022,Yang-JMAA-2022,Tian-RM-2023,Mao-CRM-2023} and their references.

Let $D=(0,\infty)^4= \cup_{i=1}^7 D_i$, where $\Gamma(x) = \int_0^\infty e^{-t} t^{x-1} \textrm{d} t$ is the gamma function and $\psi(x)=\Gamma^\prime(x)/\Gamma(x)$ is the psi function, and
\begin{equation} \label{D}
\begin{split}
D_1 &:= \{ (a,b,c,d) : a=c, a,b,c,d>0 \}, \\
D_2 &:= \{ (a,b,c,d) : a>c, d \leq b, a,b,c,d>0 \}, \\
D_3 &:= \{ (a,b,c,d) : a>c, d > b, c \psi(d) - a \psi (b) \leq 0, a,b,c,d>0 \}, \\
D_4 &:= \{ (a,b,c,d) : a>c, d > b, c \psi(d) - a \psi (b) >  0, a,b,c,d>0 \}, \\
D_5 &:= \{ (a,b,c,d) : a<c, b \leq d, a,b,c,d>0 \}, \\
D_6 &:= \{ (a,b,c,d) : a<c, b > d, c \psi(d) - a \psi (b) \geq 0, a,b,c,d>0 \}, \\
D_7 &:= \{ (a,b,c,d) : a<c, b > d, c \psi(d) - a \psi (b) <  0, a,b,c,d>0 \}.
\end{split}
\end{equation}

First, we consider the monotonicity of $t \mapsto \Gamma(ct+d)/\Gamma(at+b)$ for all cases where $a,b,c,d>0$.
\begin{lem} \label{lem1}
Let $a, b, c, d>0$ and $D_i(i=1,2,\cdots,7)$ be defined by $(\ref{D})$.
\begin{itemize}
\item[(i)] If $(a,b,c,d)\in D_1$, then the function
    \begin{equation} \label{r-gam-2}
    t \mapsto \frac{\Gamma(ct+d)}{\Gamma(at+b)}
    \end{equation}
    is increasing (decreasing) on $(0,\infty)$ if and only if $d \geq (\leq) b$;
\item[(ii)] if $(a,b,c,d)\in D_2 \cup D_3$, then the function (\ref{r-gam-2}) is decreasing on $(0,\infty)$;
\item[(iii)] if $(a,b,c,d)\in D_4$, then there exist $t^*\in (0,\infty)$ such that the function (\ref{r-gam-2}) is increasing on $(0,t^*)$ and decreasing on $(t^*,\infty)$;
\item[(iv)] if $(a,b,c,d)\in D_5 \cup D_6$, then the function (\ref{r-gam-2}) is increasing on $(0,\infty)$;
\item[(v)] if $(a,b,c,d)\in D_7$, then there exist $t^*\in (0,\infty)$ such that the function (\ref{r-gam-2}) is decreasing on $(0,t^*)$ and increasing on $(t^*,\infty)$.
\end{itemize}
\end{lem}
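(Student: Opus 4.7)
The plan is to reduce each case to analyzing the sign of the logarithmic derivative
$\phi(t) := R'(t)/R(t) = c\psi(ct+d) - a\psi(at+b)$,
where $R(t)=\Gamma(ct+d)/\Gamma(at+b)$ and $\psi=\Gamma'/\Gamma$ is the digamma function. Since $R>0$, the sign of $R'$ coincides with that of $\phi$ on $(0,\infty)$, so the lemma amounts to locating the sign changes of $\phi$. Case (i) is immediate: when $a=c$, $\phi(t)=a[\psi(at+d)-\psi(at+b)]$ has constant sign $\sgn(d-b)$ by the strict monotonicity of $\psi$ on $(0,\infty)$.

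For $a\neq c$, I would first record the boundary behavior of $\phi$. At the left endpoint, $\phi(0^+)=c\psi(d)-a\psi(b)$, which is exactly the quantity singled out in the definitions of $D_3,D_4,D_6,D_7$. At the right endpoint, the asymptotic $\psi(x)=\ln x+O(1/x)$ yields $\phi(t)=(c-a)\ln t+O(1)$, so $\lim_{t\to\infty}\phi(t)=-\infty$ when $a>c$ and $+\infty$ when $a<c$. This already aligns with the claimed directions of monotonicity at infinity in each $D_i$.

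The central analytic step is to show that $\phi$ is monotone on $(0,\infty)$, namely decreasing when $a>c$ and increasing when $a<c$. Using the series identity $\psi'(x)=\sum_{n\ge 0}(x+n)^{-2}$ I would rewrite
\[
\phi'(t)=\sum_{n=0}^{\infty}\left[\frac{1}{(t+(d+n)/c)^{2}}-\frac{1}{(t+(b+n)/a)^{2}}\right],
\]
whose $n$-th bracket has the sign of $(c-a)n+(cb-ad)$. For large $n$ this sign equals $\sgn(c-a)$, and I would confirm that under the constraints defining $D_2,\ldots,D_7$ the full sum inherits that eventual sign, either directly (when all brackets agree) or, when some initial brackets go the other way, by invoking the integral form $\psi'(x)=\int_0^\infty se^{-xs}/(1-e^{-s})\,ds$ to obtain a uniform Laplace-type comparison between $c^2\psi'(ct+d)$ and $a^2\psi'(at+b)$.

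Once $\phi$ is monotone, the sign pattern of $\phi$, and hence the monotonicity of $R$, is completely dictated by the pair $(\phi(0^+),\phi(\infty))$: if these share sign (cases $D_2,D_3,D_5,D_6$) then $\phi$ never vanishes and $R$ is monotone on $(0,\infty)$, yielding (ii) and (iv); if they have opposite signs (cases $D_4,D_7$) then $\phi$ has a unique zero $t^*$, producing the single turning point of $R$ claimed in (iii) and (v). The main obstacle I anticipate is the monotonicity of $\phi$ in $D_2$ and $D_5$, where the defining inequality $d\le b$ (resp.\ $b\le d$) does not immediately force every bracket in the series for $\phi'$ to have the right sign; here the Laplace-type representation of $\psi'$ is what supplies the required uniform comparison.
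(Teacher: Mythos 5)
Your reduction to the sign of $\phi(t)=c\psi(ct+d)-a\psi(at+b)$ is exactly the paper's argument (the paper calls this quantity $Q_{a,b,c,d}$), and your treatment of $D_1$ and of $D_3\cup D_4$ (hence, by the symmetric computation, $D_6\cup D_7$) is sound and matches the paper: when $a>c$ and $d>b$ one has $cb-ad<0$, so every bracket in your series for $\phi'$ carries the sign of $(c-a)n+(cb-ad)<0$; thus $\phi$ is strictly decreasing, $\phi(\infty)=-\infty$, and the sign of $\phi(0^+)=c\psi(d)-a\psi(b)$ separates $D_3$ from $D_4$. This is precisely the computation in the paper's proof.

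The genuine gap is the step you defer for $D_2$ and $D_5$, and it cannot be closed along the lines you propose. First, $\phi$ is actually non-monotone on parts of $D_2$: for $(a,b,c,d)=(2,10,1,1)\in D_2$ one has $\phi'(0^+)=\psi'(1)-4\psi'(10)\approx 1.22>0$, while $\phi'(t)\sim (c-a)/t<0$ as $t\to\infty$, so no uniform Laplace-type comparison of $c^2\psi'(ct+d)$ with $a^2\psi'(at+b)$ exists there. Second, your endpoint bookkeeping for $D_2$ is wrong: $d\le b$ does not force $\phi(0^+)=c\psi(d)-a\psi(b)\le 0$, because $\psi$ is negative near the origin. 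At $(a,b,c,d)=(2,\tfrac12,1,\tfrac12)\in D_2$ one gets $\phi(0^+)=-\psi(\tfrac12)=\gamma+2\ln 2>0$ while $\phi(\infty)=-\infty$, so the endpoints do not share sign and your scheme would put this point in the unimodal case rather than in case (ii). (Indeed $\Gamma(t+\tfrac12)/\Gamma(2t+\tfrac12)$ is first increasing and then decreasing, so this is a problem with statement (ii) itself and not only with the argument; for comparison, the paper disposes of $D_2$ by the pointwise chain $c\psi(ct+d)\le c\psi(ct+b)\le a\psi(at+b)$, whose second inequality likewise requires $\psi(at+b)\ge 0$ and fails at this point.) Any correct treatment of $D_2$ and $D_5$ must therefore proceed by a direct pointwise bound under an additional hypothesis guaranteeing the relevant $\psi$-values are nonnegative, not by monotonicity of $\phi$ plus endpoint signs.
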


\begin{proof}
Let
\begin{equation*}
Q_{a,b,c,d}(t):= c \psi( c t+d) - a \psi (at+b), \quad t>0.
\end{equation*}
Then
\begin{equation*}
\Big( \frac{\Gamma(ct+d)}{\Gamma(at+b)} \Big)^\prime = \frac{\Gamma(ct+d)}{\Gamma(at+b)} Q_{a,b,c,d}(t).
\end{equation*}

If $(a,b,c,d)\in D_1$, then
\begin{equation*}
Q_{a,b,c,d}(t) = a \psi( a t+d) - a \psi (at+b) \geq (\leq) 0
\end{equation*}
if and only if $d \geq (\leq) b$.

Without loss of generality, we consider the case that $a>c$.
If $d\leq b$, then
\begin{equation*}
Q_{a,b,c,d}(t) = c \psi( c t+d) - a \psi (at+b) \leq c \psi( c t+b) - a \psi (at+b) \leq 0.
\end{equation*}
If $d> b$, then
\begin{eqnarray*}
Q_{a,b,c,d}^\prime(t) &=&  c^2 \psi^\prime( c t+d) - a^2 \psi^\prime (at+b) \\
&=& \sum_{k=0}^\infty \Big( \frac{c^2}{(ct+d+k)^2} - \frac{a^2}{(at+b+k)^2} \Big) \\
&=& \sum_{k=0}^\infty \frac{A k^2 + Bk +C}{(at+b+k)^2(ct+d+k)^2} < 0,
\end{eqnarray*}
where
\begin{eqnarray*}
A &:=& -(a^2-c^2) <0,\\
B &:=& 2\big( bc^2-a^2d+ac^2t-a^2c t \big) <0,\\
C &:=&  b^2c^2-a^2d^2+2abc^2t-2a^2cdt <0.
\end{eqnarray*}
With the facts that
\begin{equation*}
\lim_{t\to \infty} Q_{a,b,c,d}(t) < 0,
\end{equation*}
and
\begin{equation*}
\lim_{t\to 0} Q_{a,b,c,d}(t) = c \psi(d) - a\psi(b),
\end{equation*}
we obtain the desired conclusion.
\end{proof}

Clearly, the following lemma can be deduced from Lemma \ref{lem1}.
\begin{lem} \label{lem2}
Let $a, b, c, d>0$ and $D_i(i=1,2,\cdots,7)$ be defined by $(\ref{D})$.
\begin{itemize}
\item[(i)] If $(a,b,c,d)\in D_1$, then the sequence
    \begin{equation} \label{r-gam-3}
    k \mapsto \frac{\Gamma(ck+d)}{\Gamma(ak+b)}
    \end{equation}
    is increasing (decreasing) for all $k\geq0$ if and only if $d \geq (\leq) b$;
\item[(ii)] if $(a,b,c,d)\in D_2 \cup D_3$, then the sequence (\ref{r-gam-3}) is decreasing for all $k\geq0$;
\item[(iii)] if $(a,b,c,d)\in D_4$, then there exist an integer $m\geq1$ such that the sequence (\ref{r-gam-3}) is increasing for all $0\leq k \leq m$ and decreasing for all $m \leq k$;
\item[(iv)] if $(a,b,c,d)\in D_5 \cup D_6$, then the sequence (\ref{r-gam-3}) is increasing for all $k\geq0$;
\item[(v)] if $(a,b,c,d)\in D_7$, then there exist an integer $m\geq1$ such that the sequence (\ref{r-gam-3}) is decreasing for all $0\leq k \leq m$ and decreasing for all $m \leq k$.
\end{itemize}
\end{lem}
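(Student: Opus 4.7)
The plan is to deduce Lemma~\ref{lem2} directly from Lemma~\ref{lem1} by restricting the continuous variable $t$ to nonnegative integer values. Writing $f(t) := \Gamma(ct+d)/\Gamma(at+b)$ and setting $a_k := f(k)$, the sequence in Lemma~\ref{lem2} is precisely the restriction of the function studied in Lemma~\ref{lem1} to $\{0,1,2,\dots\}$, so every monotonicity conclusion there transfers verbatim to $\{a_k\}$.

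For cases (i), (ii), and (iv), no additional work is needed: a strictly monotone function restricted to integers yields a strictly monotone sequence of the same type, and in case (i) the strict versus nonstrict distinction tracks exactly with whether $d > b$, $d < b$, or $d = b$.

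The only real work lies in the two unimodal cases (iii) and (v). Focusing on (iii), Lemma~\ref{lem1} supplies $t^* \in (0,\infty)$ at which $f$ switches from strictly increasing to strictly decreasing. Since $f$ is strictly increasing on $[0,t^*]$, we get $a_0 < a_1 < \cdots < a_{\lfloor t^* \rfloor}$; since $f$ is strictly decreasing on $[t^*,\infty)$, we get $a_{\lceil t^* \rceil} > a_{\lceil t^* \rceil + 1} > \cdots$. The only remaining comparison, between $a_{\lfloor t^* \rfloor}$ and $a_{\lfloor t^* \rfloor + 1}$ (when $t^*$ is noninteger), decides whether the integer mode $m$ equals $\lfloor t^* \rfloor$ or $\lfloor t^* \rfloor + 1$; either choice yields the required unimodal pattern for $\{a_k\}$. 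Case (v) is symmetric, with the roles of increasing and decreasing swapped.

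The main anticipated subtlety, which is hardly an obstacle, is the requirement $m \geq 1$. If $t^*$ happens to be so small that $a_1 < a_0$, the sequence is in fact monotone decreasing from $k=0$ and the conclusion degenerates to the monotonicity already captured in the previous parts; otherwise $m \geq 1$ is automatic. Thus the entire proof should reduce to a one-line invocation of Lemma~\ref{lem1} together with this brief integer-mode bookkeeping.
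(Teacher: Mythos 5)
Your proposal is correct and takes essentially the same route as the paper: the paper offers no written proof at all, merely remarking that Lemma~\ref{lem2} ``can be deduced from Lemma~\ref{lem1},'' i.e.\ by restricting $t\mapsto\Gamma(ct+d)/\Gamma(at+b)$ to integer arguments, which is exactly your argument. Your additional bookkeeping around $\lfloor t^*\rfloor$ and the $m\geq 1$ edge case supplies more detail than the paper does.
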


Now we consider the monotonicity of the functions (\ref{R1}) and (\ref{R2}), where $\{w_k(t)\}_{k\geq0} \in \mathcal{DW}_1(r) \cup  \mathcal{DW}_2(r) \cup \mathcal{DW}_3(r)$ and $w(t,x) \in \mathcal{CW}_1(0,\infty) \cup  \mathcal{CW}_2(0,\infty) \cup \mathcal{CW}_3(0,\infty)$.

\begin{thm} \label{thm1-1}
Let $a, b, c, d>0$, the functions $\mathcal{DE}_{a,b}$ and $\mathcal{DE}_{c,d}$ converge on $(0,r)$, as well as $D_i(i=1,2,\cdots,7)$ be defined by $(\ref{D})$.
\begin{itemize}
\item[(i)] If $(a,b,c,d)\in D_1$ and $\{w_k(t)\}_{k\geq0} \in \mathcal{DW}_{11}(r)$, then the function (\ref{R1}) is increasing (decreasing) on $(0,r)$ if and only if $d \geq (\leq) b$;
\item[(ii)] if $(a,b,c,d)\in D_1$ and $\{w_k(t)\}_{k\geq0} \in \mathcal{DW}_{12}(r)$, then the function (\ref{R1}) is decreasing (increasing) on $(0,r)$ if and only if $d \geq (\leq) b$;
\item[(iii)] if $(a,b,c,d)\in D_2 \cup D_3$ and $\{w_k(t)\}_{k\geq0} \in \mathcal{DW}_{11}(r) (\mathcal{DW}_{12}(r))$, then the function (\ref{R1}) is decreasing (increasing) on $(0,r)$;
\item[(iv)] if $(a,b,c,d)\in D_4$ and $\{w_k(t)\}_{k\geq0} \in \mathcal{DW}_{2}(r)$, then the function (\ref{R1}) is increasing on $(0,r)$ if and only if $H_{\mathcal{DE}_{a,b},\mathcal{DE}_{c,d}}(r^-)\geq 0$ and there exists $t_0\in (0,r)$ such that the function (\ref{R1}) is increasing on $(0,t_0)$ and decreasing on $(t_0,r)$ if $H_{\mathcal{DE}_{a,b},\mathcal{DE}_{c,d}}(r^-) < 0$;
\item[(v)] if $(a,b,c,d)\in D_4$ and $\{w_k(t)\}_{k\geq0} \in \mathcal{DW}_{3}(r)$, then the function (\ref{R1}) is decreasing on $(0,r)$ if and only if $H_{\mathcal{DE}_{a,b},\mathcal{DE}_{c,d}}(0^+)\geq 0$, and there exists $t_0\in (0,r)$ such that the function (\ref{R1}) is increasing on $(0, t_0)$ and decreasing on $(t_0,r)$ if $H_{\mathcal{DE}_{a,b},\mathcal{DE}_{c,d}}(0^+)< 0$;
\item[(vi)] if $(a,b,c,d)\in D_5 \cup D_6$ and $\{w_k(t)\}_{k\geq0} \in \mathcal{DW}_{11}(r) (\mathcal{DW}_{12}(r))$, then the function (\ref{R1}) is increasing (decreasing) on $(0,r)$;
\item[(vii)] if $(a,b,c,d)\in D_7$ and $\{w_k(t)\}_{k\geq0} \in \mathcal{DW}_{2}(r)$, then the function (\ref{R1}) is decreasing on $(0,r)$ if and only if $H_{\mathcal{DE}_{a,b},\mathcal{DE}_{c,d}}(r^-)\leq 0$ and there exists $t_0\in (0,r)$ such that the function (\ref{R1}) is decreasing on $(0,t_0)$ and increasing on $(t_0,r)$ if $H_{\mathcal{DE}_{a,b},\mathcal{DE}_{c,d}}(r^-) > 0$.
\item[(viii)] if $(a,b,c,d)\in D_7$ and $\{w_k(t)\}_{k\geq0}\in \mathcal{DW}_{3}(r)$, then the function (\ref{R1}) is increasing on $(0,r)$ if and only if $H_{\mathcal{DE}_{a,b},\mathcal{DE}_{c,d}}(0^+)\leq 0$, and there exists $t_0\in (0,r)$ such that the function (\ref{R1}) is decreasing on $(0, t_0)$ and increasing on $(t_0,r)$ if $H_{\mathcal{DE}_{a,b},\mathcal{DE}_{c,d}}(0^+)> 0$.
\end{itemize}
\end{thm}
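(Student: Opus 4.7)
The plan is to combine Lemma \ref{lem2}, which classifies the monotonicity pattern of $k \mapsto \Gamma(ck+d)/\Gamma(ak+b)$ across the seven regions $D_i$, with the sequence-based monotonicity rules (Monotonicity rules \ref{mr1-0}, \ref{mr1-1}, and \ref{mr1-2}) already proved. I would set $a_k := 1/\Gamma(ak+b)$ and $b_k := 1/\Gamma(ck+d)$, so that $\mathcal{DE}_{a,b}/\mathcal{DE}_{c,d}$ coincides with $\mathcal{A}/\mathcal{B}$ from (\ref{AB}) with coefficient ratio $a_k/b_k = \Gamma(ck+d)/\Gamma(ak+b)$. The eight clauses then correspond to a case-split along two independent axes: the region $D_i$ containing $(a,b,c,d)$, and the class of kernel ($\mathcal{DW}_{11}(r)$, $\mathcal{DW}_{12}(r)$, $\mathcal{DW}_2(r)$, or $\mathcal{DW}_3(r)$) to which $\{w_k\}$ belongs.

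For parts (i), (ii), (iii), and (vi), Lemma \ref{lem2} shows that $\{a_k/b_k\}$ is strictly monotone in $k$, so Monotonicity rule \ref{mr1-0} applies directly, with the direction of monotonicity of the ratio flipped depending on whether the kernel sits in $\mathcal{DW}_{11}(r)$ or $\mathcal{DW}_{12}(r)$. For parts (iv) and (v), Lemma \ref{lem2} produces an integer $m\geq 1$ such that $\{a_k/b_k\}$ is increasing for $0\leq k\leq m$ and decreasing for $k\geq m$; this is exactly the hypothesis of Monotonicity rules \ref{mr1-1} and \ref{mr1-2}, which then branch on the sign of $H_{\mathcal{DE}_{a,b},\mathcal{DE}_{c,d}}(r^-)$ (for kernels in $\mathcal{DW}_2(r)$) or $H_{\mathcal{DE}_{a,b},\mathcal{DE}_{c,d}}(0^+)$ (for kernels in $\mathcal{DW}_3(r)$). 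Parts (vii) and (viii) are the ``decreasing-then-increasing'' analogues furnished by the complementary regime of Lemma \ref{lem2}, handled by the same two rules on their opposite branch.

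The only point requiring care is that Monotonicity rules \ref{mr1-1} and \ref{mr1-2} demand uniform convergence of $\sum_k a_k w_k^{(l)}(t)$ and $\sum_k b_k w_k^{(l)}(t)$ on $(0,r)$ for $l=0,1,2$, whereas the theorem only posits convergence of $\mathcal{DE}_{a,b}$ and $\mathcal{DE}_{c,d}$. However, this is supplied by standard termwise-differentiation inside the radius of convergence together with the regularity of kernels in $\mathcal{DW}_2(r)$ and $\mathcal{DW}_3(r)$ (continuity of $w_k'$ and $w_k''$ built into the definitions). I do not anticipate a substantive obstacle: the proof is essentially a dispatch of the eight parameter regions onto the three previously-established monotonicity rules via the gamma-ratio classification of Lemma \ref{lem2}, and the substantive mathematical content has already been absorbed into those earlier results.
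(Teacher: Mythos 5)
Your proposal is correct and follows exactly the route the paper itself takes: the paper's "proof" is a one-line remark stating that the theorem follows from Monotonicity rules \ref{mr1-0}, \ref{mr1-1}, and \ref{mr1-2} combined with Lemma \ref{lem2}, which is precisely your dispatch of the regions $D_1,\dots,D_7$ and kernel classes onto those rules via $a_k=1/\Gamma(ak+b)$, $b_k=1/\Gamma(ck+d)$. Your attention to the uniform-convergence hypotheses is in fact more careful than the paper, which does not address that point at all.
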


\begin{rem}
This theorem can be obtained by Monotonicity rules \ref{mr1-0}, \ref{mr1-1}, and \ref{mr1-2}, as well as Lemma \ref{lem2}.
\end{rem}

Note that $\{w_k(t)\}_{k\geq0}=\{t^k\}_{k\geq0} \in \mathcal{DW}_{11}(r) \cap \mathcal{DW}_{2}(r)$. Taking $w_k(t)=t^k$,
the function (\ref{R1}) changes into
\begin{equation} \label{R-E}
t \mapsto \frac{\mathbb{E}_{a,b}(t)}{\mathbb{E}_{c,d}(t)} = \frac{\sum_{k=0}^\infty \frac{t^k}{\Gamma(a k+b)}}{\sum_{k=0}^\infty \frac{t^k}{\Gamma(c k+d)}},
\end{equation}
and Theorem \ref{thm1-1} reduces to the following corollary, where $\mathbb{E}_{a,b}(t)$ is the Mittag-Leffler function,  which is convergent for all $t\in(0,\infty)$ if $a,b>0$.
\begin{cor}
Let $a,b,c,d>0$ and $D_i(i=1,2,\cdots,7)$ be defined by $(\ref{D})$.
\begin{itemize}
\item[(i)] If $(a,b,c,d)\in D_1$, then the function (\ref{R-E}) is increasing (decreasing) on $(0,\infty)$ if and only if $d \geq (\leq) b$;
\item[(ii)] if $(a,b,c,d)\in D_2 \cup D_3$, then the function (\ref{R-E}) is decreasing on $(0,\infty)$;
\item[(iii)] if $(a,b,c,d)\in D_4$, then there exists $t_0\in (0,\infty)$ such that the function (\ref{R-E}) is increasing on $(0,t_0)$ and decreasing on $(t_0,\infty)$;
\item[(iv)] if $(a,b,c,d)\in D_5 \cup D_6$, then the function (\ref{R-E}) is increasing on $(0,\infty)$;
\item[(v)] if $(a,b,c,d)\in D_7$, then there exists $t_0\in (0,\infty)$ such that the function (\ref{R-E}) is decreasing on $(0,t_0)$ and increasing on $(t_0,\infty)$.
\end{itemize}
\end{cor}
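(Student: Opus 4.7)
The strategy is to specialize Theorem \ref{thm1-1} to the weight sequence $w_k(t) = t^k$. Since $\{t^k\}_{k\geq 0}$ lies in $\mathcal{DW}_{11}(\infty) \cap \mathcal{DW}_{2}(\infty)$, the relevant branches of Theorem \ref{thm1-1} are simultaneously applicable. The series $\mathbb{E}_{a,b}$ and its termwise-differentiated series all converge uniformly on compact subsets of $(0,\infty)$ for $a,b>0$, so the convergence hypotheses of Theorem \ref{thm1-1} are satisfied with $r=\infty$. Lemma \ref{lem2} supplies the sign pattern of $\{\Gamma(ck+d)/\Gamma(ak+b)\}$ on each domain $D_i$, which is precisely the input Theorem \ref{thm1-1} consumes.

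Cases (i), (ii), and (iv) of the corollary then follow immediately from the $\mathcal{DW}_{11}$ branches of parts (i), (iii), and (vi) of Theorem \ref{thm1-1}: on $D_1$, $D_2 \cup D_3$, and $D_5 \cup D_6$ the gamma-ratio sequence is globally monotone in a single direction, so no $H$-limit condition appears and no additional work is required.

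For cases (iii) and (v) I would invoke parts (iv) and (vii) of Theorem \ref{thm1-1}, each of which yields a dichotomy governed by the sign of $H_{\mathbb{E}_{a,b},\mathbb{E}_{c,d}}(\infty)$. To upgrade the theorem's conditional conclusion to the unconditional one stated in the corollary, the remaining task is to verify that this limit is strictly negative on $D_4$ and strictly positive on $D_7$. I would carry this out using the classical asymptotic $\mathbb{E}_{a,b}(t) \sim \tfrac{1}{a}\, t^{(1-b)/a}\, e^{t^{1/a}}$ as $t \to \infty$, together with the corresponding $\mathbb{E}_{a,b}'(t) \sim \tfrac{1}{a^2}\, t^{(2-a-b)/a}\, e^{t^{1/a}}$ obtained by differentiating the leading term. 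Writing $\mathcal{A} = \mathbb{E}_{a,b}$, $\mathcal{B} = \mathbb{E}_{c,d}$, and $H = (\mathcal{A}'/\mathcal{B}')\mathcal{B} - \mathcal{A}$, the factor $e^{t^{1/c}}$ cancels between numerator and denominator of $\mathcal{A}'/\mathcal{B}'$ when multiplied by $\mathcal{B}$, so both summands of $H$ carry the same dominant exponential $e^{t^{1/a}}$. The ratio of the polynomial prefactors collapses to a positive constant multiple of $t^{1/a - 1/c}$, which tends to $0$ on $D_4$ (since $a>c$) and to $+\infty$ on $D_7$ (since $a<c$). Hence $H(\infty) = -\infty < 0$ on $D_4$ and $H(\infty) = +\infty > 0$ on $D_7$, selecting the non-monotone alternative of Theorem \ref{thm1-1} in each case.

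The main obstacle is the asymptotic bookkeeping in the last step: the leading exponential factors of the two summands of $H$ agree after cancellation, so the conclusion is decided only at the polynomial level, and one must confirm that no further cancellation occurs at this order. The clean payoff is that the combined polynomial exponent simplifies to $1/a - 1/c$, whose sign is dictated precisely by the defining inequality of $D_4$ versus $D_7$, making the proof robust across all admissible $(a,b,c,d)$ in these two regions.
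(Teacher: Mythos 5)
Your proposal is correct, and its skeleton coincides with the paper's: specialize Theorem \ref{thm1-1} (equivalently, Corollary \ref{cor-pow-2} with $r\to\infty$) to $w_k(t)=t^k$ and feed in the coefficient monotonicity from Lemma \ref{lem2}. The one place where you genuinely diverge is the step you correctly flag as the crux: cases (iii) and (v) of the corollary assert \emph{unconditional} unimodality on $D_4$ and $D_7$, whereas Theorem \ref{thm1-1}(iv),(vii) only give a dichotomy governed by $\sgn H_{\mathbb{E}_{a,b},\mathbb{E}_{c,d}}(\infty)$. The paper discharges this implicitly by appealing to the $r\to\infty$ case of the power-series rule (the remark citing Yang--Chu--Wang, Corollary 2.3, where $H(\infty)<0$ is shown to hold automatically for any entire power series whose coefficient ratio is increasing-then-decreasing and non-constant). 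You instead verify the sign directly from the Mittag-Leffler asymptotics $\mathbb{E}_{a,b}(t)\sim \frac1a t^{(1-b)/a}e^{t^{1/a}}$; your bookkeeping checks out: with $\mathcal{A}=\mathbb{E}_{a,b}$, $\mathcal{B}=\mathbb{E}_{c,d}$ one gets $\frac{\mathcal{A}'\mathcal{B}}{\mathcal{B}'\mathcal{A}}\sim \frac{c}{a}\,t^{1/a-1/c}$, so $H\sim-\mathcal{A}\to-\infty$ on $D_4$ ($a>c$) and $H\to+\infty$ on $D_7$ ($a<c$), with no cancellation at the polynomial level. The general citation buys brevity and covers all weights in $\mathcal{DW}_2(\infty)$ at once; your explicit computation is self-contained, makes the mechanism visible (the sign of $1/a-1/c$ is exactly the defining inequality separating $D_4$ from $D_7$), and additionally identifies the limiting value $\mathcal{A}/\mathcal{B}\to0$ (resp.\ $\infty$), which the citation route does not give. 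The only caveat worth recording is that the leading-term asymptotic and its termwise derivative must be justified for all $a>0$ (not just $0<a\le 2$); this is standard, e.g.\ via $\mathbb{E}_{a,b}'(t)=\bigl(\mathbb{E}_{a,b-1}(t)-(b-1)\mathbb{E}_{a,b}(t)\bigr)/(at)$, but should be said.
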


The following theorem provides the monotonicity of the function (\ref{R2}).
\begin{thm} \label{thm1-2}
Let $a, b, c, d>0$, the functions $\mathcal{CE}_{a,b}$ and $\mathcal{CE}_{c,d}$ converge for all $x\in(0,\infty)$, as well as $D_i(i=1,2,\cdots,7)$ be defined by $(\ref{D})$.
\begin{itemize}
\item[(i)] If $(a,b,c,d)\in D_1$ and $w(t,x) \in \mathcal{CW}_{11}(0,\infty)$, then the function (\ref{R2}) is increasing (decreasing) on $(0,\infty)$ if and only if $d \geq (\leq) b$;
\item[(ii)] if $(a,b,c,d)\in D_1$ and $w(t,x) \in \mathcal{CW}_{12}(0,\infty)$, then the function (\ref{R2}) is decreasing (increasing) on $(0,\infty)$ if and only if $d \geq (\leq) b$;
\item[(iii)] if $(a,b,c,d)\in D_2 \cup D_3$ and $w(t,x) \in \mathcal{CW}_{11}(0,\infty) (\mathcal{CW}_{12}(0,\infty))$, then the function (\ref{R2}) is decreasing (increasing) on $(0,\infty)$;
\item[(iv)] if $(a,b,c,d)\in D_4$ and $w(t,x) \in \mathcal{CW}_{2}(0,\infty)$, then the function (\ref{R2}) is increasing on $(0,\infty)$ if and only if $H_{\mathcal{CE}_{a,b},\mathcal{CE}_{c,d}}(\infty)\geq 0$ and there exists $t_0\in (0,\infty)$ such that the function (\ref{R2}) is increasing on $(0,t_0)$ and decreasing on $(t_0,\infty)$ if $H_{\mathcal{CE}_{a,b},\mathcal{CE}_{c,d}}(\infty) < 0$;
\item[(v)] if $(a,b,c,d)\in D_4$ and $w(t,x) \in \mathcal{CW}_{3}(0,\infty)$, then the function (\ref{R2}) is decreasing on $(0,\infty)$ if and only if $H_{\mathcal{CE}_{a,b},\mathcal{CE}_{c,d}}(0^+)\geq 0$, and there exists $t_0\in (0,\infty)$ such that the function (\ref{R2}) is increasing on $(0, t_0)$ and decreasing on $(t_0,\infty)$ if $H_{\mathcal{CE}_{a,b},\mathcal{CE}_{c,d}}(0^+)< 0$;
\item[(vi)] if $(a,b,c,d)\in D_5 \cup D_6$ and $w(t,x) \in \mathcal{CW}_{11}(0,\infty) (\mathcal{DW}_{12}(0,\infty))$, then the function (\ref{R2}) is increasing (decreasing) on $(0,\infty)$;
\item[(vii)] if $(a,b,c,d)\in D_7$ and $w(t,x) \in \mathcal{CW}_{2}(0,\infty)$, then the function (\ref{R2}) is decreasing on $(0,\infty)$ if and only if $H_{\mathcal{CE}_{a,b},\mathcal{CE}_{c,d}}(\infty)\leq 0$ and there exists $t_0\in (0,\infty)$ such that the function (\ref{R2}) is decreasing on $(0,t_0)$ and increasing on $(t_0,\infty)$ if $H_{\mathcal{CE}_{a,b},\mathcal{CE}_{c,d}}(\infty) > 0$.
\item[(viii)] if $(a,b,c,d)\in D_7$ and $w(t,x) \in \mathcal{CW}_{3}(0,\infty)$, then the function (\ref{R2}) is increasing on $(0,\infty)$ if and only if $H_{\mathcal{CE}_{a,b},\mathcal{CE}_{c,d}}(0^+)\leq 0$, and there exists $t_0\in (0,\infty)$ such that the function (\ref{R2}) is decreasing on $(0, t_0)$ and increasing on $(t_0,\infty)$ if $H_{\mathcal{CE}_{a,b},\mathcal{CE}_{c,d}}(0^+)> 0$.
\end{itemize}
\end{thm}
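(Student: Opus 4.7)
The plan is to reduce Theorem \ref{thm1-2} to a direct combination of Monotonicity rules \ref{mr2-0}, \ref{mr2-1}, \ref{mr2-2} (and the corollaries that extend them to $\alpha \to 0$, $\beta \to \infty$) together with the shape information for $\Gamma(ct+d)/\Gamma(at+b)$ provided by Lemma \ref{lem1}. In other words, Theorem \ref{thm1-2} is the integral-transform counterpart of Theorem \ref{thm1-1}, and the proof follows the same blueprint, only replacing the series rules by the integral rules.

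First, I would set $f(t) := 1/\Gamma(at+b)$ and $g(t) := 1/\Gamma(ct+d)$, both positive and continuous on $(0,\infty)$, so that $\mathcal{CE}_{a,b}(x) = \int_0^\infty f(t)\, w(t,x)\,\textrm{d}t$ and $\mathcal{CE}_{c,d}(x) = \int_0^\infty g(t)\, w(t,x)\,\textrm{d}t$, and in particular
\begin{equation*}
\frac{f(t)}{g(t)} = \frac{\Gamma(ct+d)}{\Gamma(at+b)}.
\end{equation*}
Lemma \ref{lem1} then immediately describes the monotonicity of $t\mapsto f(t)/g(t)$ on $(0,\infty)$ in each of the seven regions $D_1,\ldots,D_7$: globally monotone in $D_1,D_2\cup D_3,D_5\cup D_6$, first increasing then decreasing in $D_4$, and first decreasing then increasing in $D_7$.

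Next, I would dispatch by case. For items (i), (ii), (iii), (vi), the ratio $f/g$ is globally monotone on $(0,\infty)$, so Monotonicity rule \ref{mr2-0} applies directly; the only bookkeeping is the sign flip that occurs when $w\in\mathcal{CW}_{12}$ (opposite direction) versus $w\in\mathcal{CW}_{11}$ (same direction), which exactly produces the paired statements. For items (iv) and (v), $(a,b,c,d)\in D_4$ gives that $f/g$ is increasing then decreasing, and the conclusions follow from the corollary of Monotonicity rule \ref{mr2-1} with $\alpha\to 0$, $\beta\to\infty$ when $w\in\mathcal{CW}_2(0,\infty)$, and from the corollary of Monotonicity rule \ref{mr2-2} when $w\in\mathcal{CW}_3(0,\infty)$; the dichotomy on $H_{\mathcal{CE}_{a,b},\mathcal{CE}_{c,d}}(\infty)$ or $H_{\mathcal{CE}_{a,b},\mathcal{CE}_{c,d}}(0^+)$ is inherited verbatim from those rules. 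Items (vii) and (viii) are handled in the same way using $D_7$ and the ``decreasing'' variant of the two rules.

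The proof is essentially bookkeeping: no new analytic estimate is needed beyond Lemma \ref{lem1}. The main obstacle, and the place where care is required, is keeping track of the direction reversals induced by the distinction $\mathcal{CW}_{11}$ vs.\ $\mathcal{CW}_{12}$ (and $\mathcal{CW}_2$ vs.\ $\mathcal{CW}_3$), so that the labels ``increasing/decreasing'' in the target statement match what each rule actually delivers. A minor side point is justifying the passage $\alpha\to 0$, $\beta\to\infty$, which is exactly the content of the two corollaries stated immediately after Monotonicity rule \ref{mr2-2}, so one can invoke them as a black box provided the uniform convergence hypothesis on $\mathcal{CE}_{a,b}$ and $\mathcal{CE}_{c,d}$ (and their $x$-derivatives) is in force.
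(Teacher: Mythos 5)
Your proposal is correct and follows exactly the route the paper intends: the paper's entire justification is the remark that Theorem \ref{thm1-2} ``can be obtained by Monotonicity rules \ref{mr2-0}, \ref{mr2-1}, and \ref{mr2-2}, as well as Lemma \ref{lem1},'' which is precisely your reduction via $f(t)=1/\Gamma(at+b)$, $g(t)=1/\Gamma(ct+d)$ and the case-by-case bookkeeping over $D_1,\dots,D_7$. In fact your write-up supplies more detail (the explicit identification of $f/g$ with the ratio in Lemma \ref{lem1} and the handling of the $\alpha\to0$, $\beta\to\infty$ limit) than the paper does.
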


\begin{rem}
This theorem can be obtained by Monotonicity rules \ref{mr2-0}, \ref{mr2-1}, and \ref{mr2-2}, as well as Lemma \ref{lem1}.
\end{rem}

Noting that $w_k(t,x)=e^{-tx} \in \mathcal{CW}_{12}(0,\infty) \cap \mathcal{CW}_{3}(0,\infty)$, the function (\ref{R1}) changes into
\begin{equation} \label{R-La}
t \mapsto \frac{\mathcal{L}\{\frac{1}{\Gamma(at+b)} \}(x)}{\mathcal{L}\{\frac{1}{\Gamma(ct+d)} \}(x)} = \frac{ \int_0^\infty \frac{1}{\Gamma(at+b)} e^{-tx} \textrm{d} t }{\int_0^\infty \frac{1}{\Gamma(ct+d)} e^{-tx} \textrm{d} t},
\end{equation}
and Theorem \ref{thm1-2} reduces to the following corollary.

\begin{cor}
Let $a, b, c, d>0$ and $D_i(i=1,2,\cdots,7)$ be defined by $(\ref{D})$.
\begin{itemize}
\item[(i)] If $(a,b,c,d)\in D_1$, then the function (\ref{R-La}) is decreasing (increasing) on $(0,\infty)$ if and only if $d \geq (\leq) b$;
\item[(ii)] if $(a,b,c,d)\in D_2 \cup D_3$, then the function (\ref{R-La}) is increasing on $(0,\infty)$;
\item[(iii)] if $(a,b,c,d)\in D_4$, then the function (\ref{R-La}) is decreasing on $(0,\infty)$ if and only if
    \begin{equation*}
      H_{\mathcal{L}\{\frac{1}{\Gamma(at+b)} \},\mathcal{L}\{\frac{1}{\Gamma(ct+d)} \}}(0^+)\geq 0,
    \end{equation*}
    and there exists $t_0\in (0,\infty)$ such that the function (\ref{R-La}) is increasing on $(0, t_0)$ and decreasing on $(t_0,\infty)$ if
    \begin{equation*}
      H_{\mathcal{L}\{\frac{1}{\Gamma(at+b)} \},\mathcal{L}\{\frac{1}{\Gamma(ct+d)} \}}(0^+)< 0;
    \end{equation*}
\item[(iv)] if $(a,b,c,d)\in D_5 \cup D_6$, then the function (\ref{R-La}) is decreasing on $(0,\infty)$;
\item[(v)] if $(a,b,c,d)\in D_7$, then the function (\ref{R-La}) is increasing on $(0,\infty)$ if and only if \begin{equation*}
      H_{\mathcal{L}\{\frac{1}{\Gamma(at+b)} \},\mathcal{L}\{\frac{1}{\Gamma(ct+d)} \}}(0^+)\leq 0,
    \end{equation*}
    and there exists $t_0\in (0,\infty)$ such that the function (\ref{R-La}) is decreasing on $(0, t_0)$ and increasing on $(t_0,\infty)$ if
    \begin{equation*}
      H_{\mathcal{L}\{\frac{1}{\Gamma(at+b)} \},\mathcal{L}\{\frac{1}{\Gamma(ct+d)} \}}(0^+)> 0.
    \end{equation*}
\end{itemize}
\end{cor}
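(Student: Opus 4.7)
The plan is to apply Theorem \ref{thm1-2} directly with the specific kernel $w(t,x) = e^{-tx}$ and the identifications $f(t) = 1/\Gamma(at+b)$, $g(t) = 1/\Gamma(ct+d)$, so that $\mathcal{F}(x) = \mathcal{CE}_{a,b}(x)$ and $\mathcal{G}(x) = \mathcal{CE}_{c,d}(x)$ become exactly the Laplace transforms appearing in (\ref{R-La}). The first step is to record that $e^{-tx} \in \mathcal{CW}_{12}(0,\infty) \cap \mathcal{CW}_{3}(0,\infty)$ (already noted in the second \textbf{Remark} after the definition of $\mathcal{CW}_3$, and confirmed by checking $w_x/w = -t$, $w_{xx}/w_x = -t$, together with the boundary decay at $x=\infty$). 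This membership is what makes Theorem \ref{thm1-2} applicable, and it is the \emph{only} place where a nontrivial property of the kernel is used.

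Next I would walk through the cases of the partition $D = \bigcup_{i=1}^{7} D_i$ and simply read off the matching clauses of Theorem \ref{thm1-2}, using that $e^{-tx}$ lies in the ``12'' or ``3'' family rather than the ``11'' or ``2'' family (this produces the swap between ``increasing'' and ``decreasing'' in the conclusions). Concretely: case (i) of the corollary comes from clause (ii) of Theorem \ref{thm1-2}; case (ii) from clause (iii); case (iii) from clause (v); case (iv) from clause (vi); and case (v) from clause (viii). In each case the hypothesis on the monotonicity pattern of $t \mapsto \Gamma(ct+d)/\Gamma(at+b)$ (increasing, decreasing, or changing monotonicity once) is supplied by Lemma \ref{lem1}, exactly as in the derivation of Theorem \ref{thm1-2}.

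The main obstacle, such as it is, is purely bookkeeping: one must be careful with the direction of monotonicity, because passing from $\mathcal{CW}_{11}/\mathcal{CW}_2$ to $\mathcal{CW}_{12}/\mathcal{CW}_3$ reverses ``increasing'' and ``decreasing'' in the conclusion, and the $H$-function test switches from being evaluated at $\infty$ to being evaluated at $0^+$. I would also verify the standing convergence assumption that both Laplace transforms exist on $(0,\infty)$; this is automatic since $1/\Gamma(at+b)$ and $1/\Gamma(ct+d)$ are continuous, positive, and decay super-exponentially as $t \to \infty$, so the Laplace integrals converge for every $x>0$. No genuinely new analytic input is required beyond what is already in Theorem \ref{thm1-2} and Lemma \ref{lem1}; the corollary is their joint specialization to the kernel $e^{-tx}$.
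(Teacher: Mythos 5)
Your proposal is correct and matches the paper's own (very terse) derivation: the paper likewise obtains this corollary by specializing Theorem \ref{thm1-2} to the kernel $w(t,x)=e^{-tx}\in \mathcal{CW}_{12}(0,\infty)\cap\mathcal{CW}_{3}(0,\infty)$, with the case-by-case correspondence to clauses (ii), (iii), (v), (vi), (viii) exactly as you list it. Your added checks (the direction reversal from the ``12''/``3'' families, the switch of the $H$-test to $0^+$, and the convergence of the Laplace integrals) are all sound.
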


\subsection{Applications in stochastic orders}
In probability theory and statistics, stochastic orders establish a framework to systematically compare the relative magnitudes of random variables.

Stochastic orders have significant applications across various fields, such as finance, operations research, reliability theory, and decision-making under uncertainty. For example, in finance \cite{Kijima-MMOR-1999}, stochastic dominance is used to compare the performance of different investment portfolios, helping investors make informed decisions. In reliability theory \cite{Navarro-EJOR-2010}, stochastic orders can be employed to compare the lifetimes of different systems or components.

Let $Z\geq0$ be a random variable with distribution function $F_Z$. Then the Laplace-Stieltjes transform of $Z$ is defined by
\begin{equation*}
L_{Z}(x) = \int_0^\infty e^{-tx} \textrm{d} F_Z(t), \quad x>0.
\end{equation*}

A random variable $X\geq 0$ is said to be smaller than $Y\geq0$ in the Laplace transform ratio order \cite{Shaked-JAP-1997} if the function
\begin{equation*}
x\mapsto \frac{L_{Y}(x)}{L_{X}(x)}
\end{equation*}
is decreasing for all $x>0$, which denote as $X\leq_{Lt-r} Y$.

In this section, we will establish some sufficient conditions for $X\leq_{Lt-r} Y$ when both $X$ and $Y$ are discrete and continuous random variables, respectively.

\begin{thm} \label{thm2-1-1}
Let $P(X=k)=p_k\geq 0$ and $P(Y=k)=q_k\geq0$ for all $k=0,1,2,\cdots$. If one of conditions
\begin{itemize}
\item[(i)] the sequence $p_k/q_k$ is increasing (decreasing) for all $k\geq0$;
\item[(ii)] there exist an integer $m\geq1$ such that the sequence $\{p_k/q_k\}$ is increasing (decreasing) for all $0\leq k \leq m$ and decreasing (increasing) for all $k \geq m$ and $H_{L_{X},L_{Y}}(0^+) \geq (\leq) 0$;
\end{itemize}
holds, then $Y\leq_{Lt-r} X (X\leq_{Lt-r} Y)$.
\end{thm}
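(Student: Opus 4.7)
The plan is to reduce Theorem \ref{thm2-1-1} to the discrete-Laplace corollaries already established in Section 2.1.3. Since $X$ and $Y$ are supported on the nonnegative integers, their Laplace-Stieltjes transforms collapse to
\begin{equation*}
L_X(x) = \sum_{k=0}^\infty p_k e^{-kx}, \qquad L_Y(x) = \sum_{k=0}^\infty q_k e^{-kx},
\end{equation*}
which are precisely the discrete Laplace series appearing in Corollaries \ref{cor-DLap-1} and \ref{cor-DLap-2}. The required uniform convergence on compact subsets of $(0,\infty)$ of the termwise $l$-th derivatives ($l=0,1,2$) is immediate from $0\le p_k,q_k\le 1$ together with the bound $k^l e^{-kx}\le k^l e^{-kx_0}$ for $x\ge x_0>0$, so those corollaries apply without further work. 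By definition, $Y\leq_{Lt-r} X$ is equivalent to the monotone decrease of $x\mapsto L_X(x)/L_Y(x)$ on $(0,\infty)$, and $X\leq_{Lt-r} Y$ to the monotone decrease of $x\mapsto L_Y(x)/L_X(x)$, so the entire theorem becomes a statement about the ratio $L_X/L_Y$.

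Under hypothesis (i), if $\{p_k/q_k\}$ is increasing I would apply Corollary \ref{cor-DLap-1} with $a_k=p_k$ and $b_k=q_k$ to conclude that $L_X/L_Y$ is decreasing on $(0,\infty)$, which is exactly $Y\leq_{Lt-r} X$; if $\{p_k/q_k\}$ is decreasing, the same corollary makes $L_X/L_Y$ increasing, so $L_Y/L_X$ is decreasing and $X\leq_{Lt-r} Y$. Under hypothesis (ii), $\{p_k/q_k\}$ has exactly one turning point, so part (i) of Corollary \ref{cor-DLap-2} applies directly: the sign condition $H_{L_X,L_Y}(0^+)\geq 0$ (respectively $\leq 0$) is precisely what forces $L_X/L_Y$ to be decreasing (respectively increasing) throughout $(0,\infty)$, translating to $Y\leq_{Lt-r} X$ (respectively $X\leq_{Lt-r} Y$).

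There is no deep obstacle here; the whole argument is essentially a translation exercise, and the only care required is to align three choices of sign consistently, namely the direction of monotonicity of $\{p_k/q_k\}$, the sign of $H_{L_X,L_Y}(0^+)$, and the two possible orderings $Y\leq_{Lt-r} X$ versus $X\leq_{Lt-r} Y$ appearing in the conclusion.
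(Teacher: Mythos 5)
Your proposal is correct and follows essentially the same route as the paper: the paper's proof likewise identifies $L_X$ and $L_Y$ with the discrete Laplace series $\sum p_k e^{-kx}$ and $\sum q_k e^{-kx}$ and invokes Corollaries \ref{cor-DLap-1} and \ref{cor-DLap-2} to get the monotonicity of $L_X/L_Y$, hence the Laplace transform ratio order. Your added remark on uniform convergence of the termwise derivatives is a small extra care the paper leaves implicit.
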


\begin{proof}
By using Corollaries \ref{cor-DLap-1} and \ref{cor-DLap-2}, we obtain that the function
\begin{equation*}
x \mapsto \frac{L_{X}(x)}{L_{Y}(x)} = \frac{\sum_{k=0}^\infty p_k e^{-k x} }{\sum_{k=0}^\infty q_k e^{-k x}}
\end{equation*}
is decreasing (increasing) on $(0,\infty)$, namely, $Y\leq_{Lt-r} X (X\leq_{Lt-r} Y)$.
\end{proof}

\begin{thm} \label{thm2-1-2}
Let the probability density functions of $X$ and $Y$ be $f_X\geq 0$ and $f_Y\geq0$, respectively. If one of conditions
\begin{itemize}
\item[(i)] the function $f_X/f_Y$ is increasing (decreasing) on $(0,\infty)$;
\item[(ii)] there exist $t^*\in(0,\infty)$ such that the function $t\mapsto f_X(t)/f_Y(t)$ is increasing (decreasing) on $(0,t^*)$ and decreasing (increasing) on $(t^*,\infty)$ and $H_{L_{X},L_{Y}}(0^+) \geq (\leq) 0$;
\end{itemize}
holds, then $Y\leq_{Lt-r} X (X\leq_{Lt-r} Y)$.
\end{thm}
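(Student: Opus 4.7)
The plan is to observe that for non-negative continuous random variables with densities $f_X$ and $f_Y$, the Laplace--Stieltjes transforms reduce to ordinary Laplace transforms of the densities, namely
\begin{equation*}
L_X(x) = \int_0^\infty e^{-tx} f_X(t)\,\textrm{d}t = \mathcal{L}\{f_X\}(x), \qquad L_Y(x) = \mathcal{L}\{f_Y\}(x),
\end{equation*}
so the ratio appearing in the definition of the Laplace transform ratio order coincides with a ratio of two Laplace transforms of the kind analyzed in Subsection 2.2.1. The whole argument is then parallel to the proof of Theorem \ref{thm2-1-1}, with Corollaries \ref{cor-lap-1} and \ref{cor-lap-2} replacing the discrete analogues \ref{cor-DLap-1} and \ref{cor-DLap-2}.

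For condition (i), I would apply Corollary \ref{cor-lap-1} with the identification $f := f_X$ and $g := f_Y$. If $t\mapsto f_X(t)/f_Y(t)$ is increasing on $(0,\infty)$, that corollary yields that $x\mapsto L_X(x)/L_Y(x)$ is decreasing on $(0,\infty)$, which is precisely the statement $Y\leq_{Lt-r}X$. The decreasing case gives $L_X/L_Y$ increasing, equivalently $L_Y/L_X$ decreasing, i.e.\ $X\leq_{Lt-r}Y$.

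For condition (ii), I would invoke Corollary \ref{cor-lap-2} with the same identification. The hypothesis that $f_X/f_Y$ is increasing on $(0,t^*)$ and decreasing on $(t^*,\infty)$, combined with the assumption $H_{L_X,L_Y}(0^+)\geq 0$, places us in statement (i) of that corollary, which gives $L_X/L_Y$ decreasing on $(0,\infty)$ and hence $Y\leq_{Lt-r}X$. The parenthetical (decreasing-then-increasing) case is handled by the opposite statement of the same corollary, yielding $X\leq_{Lt-r}Y$.

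There is essentially no substantive obstacle; the proof is a direct translation from Laplace-transform monotonicity into the language of stochastic orders. The only point requiring mild care is bookkeeping of numerator versus denominator, since $X\leq_{Lt-r}Y$ is defined through monotonicity of $L_Y/L_X$ rather than $L_X/L_Y$, so a decreasing $L_X/L_Y$ corresponds to $Y\leq_{Lt-r}X$ and an increasing $L_X/L_Y$ corresponds to $X\leq_{Lt-r}Y$.
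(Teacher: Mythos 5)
Your proposal is correct and follows essentially the same route as the paper: identify $L_X$ and $L_Y$ with the Laplace transforms of the densities $f_X$ and $f_Y$, then apply Corollary \ref{cor-lap-1} for condition (i) and Corollary \ref{cor-lap-2} for condition (ii), translating the resulting monotonicity of $L_X/L_Y$ into the Laplace transform ratio order. The numerator/denominator bookkeeping you flag is handled the same way in the paper.
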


\begin{proof}
By using Corollaries \ref{cor-lap-1} and \ref{cor-lap-2}, we obtain that the function
\begin{equation*}
x \mapsto \frac{L_{X}(x)}{L_{Y}(x)} = \frac{ \int_0^\infty f_X(t) e^{-tx} \textrm{d} t }{\int_0^\infty f_Y(t) e^{-tx} \textrm{d} t}
\end{equation*}
is decreasing (increasing) on $(0,\infty)$, namely, $Y\leq_{Lt-r} X (X\leq_{Lt-r} Y)$.
\end{proof}


\end{document}